\newcommand{\urlalt}[2]{\href{#1}{\urlstyle{rm}\nolinkurl{#2}}}
\SetMathAlphabet{\mathcal}{normal}{OMS}{cmsy}{m}{n}
\SetMathAlphabet{\mathcal}{bold}{OMS}{cmsy}{b}{n}
\newcounter{main}
\newtheorem{theorem}[main]{Theorem}
\newtheorem{lemma}[main]{Lemma}
\newtheorem{proposition}[main]{Proposition}
\newtheorem{corollary}[main]{Corollary}
\theoremstyle{definition}
\newtheorem{definition}[main]{Definition}
\newtheorem{remark}[main]{Remark}
\newcommand{\linf}{\ell^\infty}
\newcommand{\Set}{\mathbf{Set}}
\newcommand{\CvNAMIU}{\mathbf{CvNA}_\mathrm{MIU}}
\newcommand{\vNAMIU}{\mathbf{vNA}_\mathrm{MIU}}
\newcommand{\vNACPsU}{\mathbf{vNA}_\mathrm{CPsU}}
\newcommand{\DvNAMIU}{\mathbf{DvNA}_\mathrm{MIU}}
\newcommand{\CMIU}{\mathbf{C}^*_\mathrm{MIU}}
\newcommand{\CCMIU}{\mathbf{CC}^*_\mathrm{MIU}}
\newcommand{\oc}{\mathord{!}}
\newcommand{\limp}{\mathbin{\multimap}}
\newcommand{\nsp}{\mathrm{nsp}}
\DeclarePairedDelimiter{\sem}{\llbracket}{\rrbracket}
\newcommand{\qcomp}{\mathcal{F}}
\newcommand{\incfun}{\mathcal{J}}
\newcommand{\C}{\mathbb{C}}
\newcommand{\op}{\mathrm{op}}
\newcommand{\id}{\mathrm{id}}
\DeclarePairedDelimiter{\abs}{\lvert}{\rvert}
\newcommand{\Mon}{\mathrm{Mon}}
\newcommand{\CMon}{\mathrm{CMon}}
\newcommand{\catB}{\mathbf{B}}
\newcommand{\catC}{\mathbf{C}}
\newcommand{\scrA}{\mathscr{A}}
\title{Duplicable von Neumann Algebras}
\author{Kenta Cho and Abraham Westerbaan
\institute{Institute for Computing and Information Sciences\\
  Radboud University, Nijmegen, the Netherlands}
\email{\{K.Cho,awesterb\}@cs.ru.nl}
}
\begin{document}
    
\maketitle

\begin{abstract}
Recently, we have
shown that von Neumann algebras
form a model for
 Selinger and Valiron's quantum lambda calculus.
In this paper,
we explain 
our choice of interpretation of the duplicability operator ``$\oc$''
by studying those von Neumann algebras
that might have served as the interpretation
of duplicable types, 
namely those that carry a (commutative) monoid structure
with respect to the spatial tensor product.
We show that every such monoid
is the (possibly infinite) direct product of the complex numbers,
and that
our interpretation of the $\oc$ operator
of the quantum lambda calculus
is exactly the free (commutative) monoid.
\end{abstract}


\section{Introduction}

The quantum lambda calculus~\cite{SelingerV2009},
introduced by Selinger and Valiron,
is a typed programming language
that contains not only a qubit type,
but also a function type $A \limp B$,
and a `duplicable' type $!A$.
It has an (affine) linear type system,
where each variable may be used at most once
unless it has type $\oc A$,
reflecting the no-cloning property of qubits.
In~\cite{CW2016},
we gave a model of the quantum lambda calculus
based on the category~$\vNAMIU$
of von Neumann algebras
and normal unital $*$-homomorphisms (normal MIU-maps, for short).
In this paper,
we  elaborate on the  interpretation
of ``!'' we chose in this model.

Recall that
the duplicability operator ``$\oc$''
is what is called the exponential modality in linear logic.
It is known that a categorical model of the $\oc$ modality
is a so-called \emph{linear exponential comonad} $L$
on a symmetric monoidal category (SMC) $(\catC,\otimes,I)$,
where each object of the form $LA$ is equipped with a commutative comonoid
structure in the SMC $\catC$:
\[
LA\longrightarrow LA\otimes LA
\qquad\qquad
LA\longrightarrow I
\]
The maps corresponds respectively to duplication (contraction rule)
and discarding (weakening rule).
Benton~\cite{Benton1995} later gave a reformulation of a linear exponential comonad as
a \emph{linear-non-linear adjunction}
\[
\xymatrix@C+1pc{
(\catB,\times,1)
\ar@/^1.5ex/[r]
\ar@{}[r]|{\bot}
&
\ar@/^1.5ex/[l]
(\catC,\otimes,I)
},
\]
which induces a linear exponential comonad on $\catC$.
We refer to~\cite{Mellies2009,SelingerV2009} for more details.

To construct a model by von Neumann algebras in~\cite{CW2016},
we used the linear-non-linear adjunction\footnote{%
The opposite category $\vNAMIU^{\op}$ is used here, because
maps between von Neumann algebras are seen as observable/predicate
transformers (Heisenberg's view), which are dual to
state transformers (Schr\"odinger's view).}
\[
\xymatrix@C+1pc{
(\Set,\times,1)
\ar@/^1.5ex/[r]^-{\linf}
\ar@{}[r]|-{\bot}
&
\ar@/^1.5ex/[l]^-{\nsp}
(\vNAMIU^{\op},\otimes,\C).
}
\]
Here~$\nsp(\mathscr{A})=\vNAMIU(\mathscr{A},\C)$
is the set of normal MIU-maps
$\varphi\colon \mathscr{A}\to\mathbb{C}$
for any von Neumann algebra~$\mathscr{A}$,
and~$\linf(X)$
is the von Neumann algebra
of bounded functions $f \colon X\to\mathbb{C}$
for any set~$X$.
Then we interpret
$\oc A$ by $\sem{!A} = \linf(\nsp(\sem{A}))$,
which carries a commutative monoid structure in
the SMC $\vNAMIU$ with
the spatial tensor product $\otimes$.
The interpretation indeed works well in the sense that
our model is adequate with respect to the operational
semantics of the quantum lambda calculus.

However, von Neumann algebras of the form $\linf(X)$
are arguably quite special, excluding
general Abelian von Neumann algebras like $L^\infty([0,1])$.
The natural question arises whether there is a broader class
of von Neumann algebras
that might serve as the interpretation of duplicable types.
This paper gives an answer to this question,
by the definition and theorem below.

\begin{definition}
A von Neumann algebra~$\mathscr{A}$
is \textbf{duplicable}
if there is a \textbf{duplicator} on~$\mathscr{A}$,
that is,
a normal positive subunital linear map
$\delta\colon \mathscr{A}\otimes \mathscr{A}\to\mathscr{A}$
with a \textbf{unit} $u\in \scrA$ such that $0\le u\le 1$,
satisfying
\begin{equation*}
\delta(a\otimes u)\ =\ a\ = \ \delta(u\otimes a)
\quad\text{for all~$a\in\mathscr{A}$.}
\end{equation*}
\end{definition}

\noindent
The unit $u$ can be identified with
a positive subunital map $\tilde{u}\colon \C\to \scrA$ via $\tilde{u}(\lambda)=\lambda u$.
The definition is motivated by the fact that the interpretation of $\oc A$
must carry a commutative monoid structure in $\vNAMIU$.
The condition is weaker, requiring the maps to be only positive subunital,
and dropping associativity and commutativity.
Nevertheless this is sufficient to prove:

\begin{theorem}
\label{thm:duplicable}
A von Neumann algebra~$\mathscr{A}$
is duplicable if and only if
$\mathscr{A}$ is MIU-isomorphic to $\linf(X)$ for some set~$X$.
In that case, the duplicator $(\delta,u)$
is unique, given by
$\delta(a\otimes b) = a\cdot b$ and $u=1$.
\end{theorem}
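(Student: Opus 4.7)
My plan is as follows. For the backward direction, when $\mathscr{A}=\linf(X)$ the spatial tensor product is $\linf(X\times X)$ and the restriction-to-diagonal map $\delta(f)(x)=f(x,x)$ is a normal unital $*$-homomorphism satisfying $\delta(a\otimes b)=ab$ and the unit axioms with $u=1$; this direction is a direct check.

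For the forward direction, suppose $(\delta,u)$ is a duplicator on $\mathscr{A}$; I proceed in three substeps. \emph{Step 1: $u=1$ and $\delta$ is unital.} From $\delta(1\otimes u)=1$ (unit law) together with subunitality I obtain $\delta(1)=1$ and hence $\delta(1\otimes(1-u))=0$; since $0\le u\otimes(1-u)\le 1\otimes(1-u)$, positivity yields $\delta(u\otimes(1-u))=0$, while the left unit law gives $\delta(u\otimes(1-u))=1-u$, so $u=1$. \emph{Step 2: $\mathscr{A}$ is commutative and $\delta(a\otimes b)=ab$.} Write $\iota_1,\iota_2\colon\mathscr{A}\hookrightarrow\mathscr{A}\otimes\mathscr{A}$ for the canonical inclusions. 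Then $E:=\iota_1\circ\delta$ is a unital, hence contractive, linear projection onto the $C^*$-subalgebra $\iota_1(\mathscr{A})$, so Tomiyama's theorem yields the bimodule property $E(\iota_1(a)\,x\,\iota_1(b))=\iota_1(a)\,E(x)\,\iota_1(b)$. Setting $x=\iota_2(c)$ and using $\iota_1(a)\iota_2(c)\iota_1(b)=ab\otimes c$ (from the commuting ranges of $\iota_1,\iota_2$) gives $\delta(ab\otimes c)=acb$; substituting $a=1$ and $b=1$ yields both $\delta(a\otimes c)=ac$ and the commutativity $ac=ca$, so $\mathscr{A}$ is commutative and $\delta$ is multiplication.

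\emph{Step 3: $\mathscr{A}$ is atomic.} Suppose it is not. Cutting down by the central projection onto the non-atomic part (to which $\delta$ restricts because it is multiplication) and then by the support of a normal state (still non-atomic, since atoms of a subprojection are atoms of $\mathscr{A}$), I may assume $\mathscr{A}$ is non-atomic and carries a faithful normal state $\omega$. Non-atomicity then lets me inductively build refining finite partitions $1=\sum_{p\in P_n}p$ with $\max_{p\in P_n}\omega(p)\to 0$; the projections $q_n:=\sum_{p\in P_n}p\otimes p$ are decreasing and satisfy $(\omega\otimes\omega)(q_n)=\sum_p\omega(p)^2\le\max_p\omega(p)\to 0$, so $q_n\downarrow 0$ $\sigma$-weakly by faithfulness of $\omega\otimes\omega$. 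Normality of $\delta$ then forces $\delta(q_n)\downarrow 0$, contradicting $\delta(q_n)=\sum_p p^2=1$ for every $n$. Hence $\mathscr{A}$ is atomic, so $\mathscr{A}\cong\linf(X)$ for $X$ the set of atoms; uniqueness of the duplicator is built into Steps 1 and 2. The main obstacle is precisely Step 3: one must convert the intuition ``multiplication $L^\infty([0,1]^2)\to L^\infty([0,1])$ would live on the measure-zero diagonal'' into a clean normality-based contradiction and manage the reduction to an algebra with a faithful normal state, since an arbitrary commutative von Neumann algebra need not admit one.
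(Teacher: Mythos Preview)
Your proof is correct and shares the paper's two key ideas---Tomiyama's theorem for commutativity, and the ``diagonal has measure zero'' partition argument for atomicity---but you execute both more directly than the paper does.

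For Step~2, the paper applies Tomiyama not to $\iota_1\circ\delta$ but to the map $f\colon\mathscr{A}\oplus\mathscr{A}\to\mathscr{A}$, $f(a,b)=\delta(a\otimes p+b\otimes p^\perp)$ for a fixed $p\in[0,1]_\mathscr{A}$; this is a norm-one projection onto the diagonal copy of $\mathscr{A}$, and Tomiyama then forces $p$ central and $\delta(a\otimes p)=ap$. Your route via the conditional expectation $E=\iota_1\circ\delta$ onto $\mathscr{A}\otimes 1$ is shorter and yields $\delta(ab\otimes c)=acb$ in one stroke, from which both commutativity and $\delta(a\otimes b)=ab$ drop out. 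For Step~3, the paper takes a measure-theoretic detour: it writes the (now Abelian) $\mathscr{A}$ as $\bigoplus_i L^\infty(X_i)$ with each $\mu(X_i)<\infty$, splits each $X_i$ into a discrete and a continuous part, and runs the partition argument on the continuous part using the faithful normal state $f\mapsto\mu(X)^{-1}\int f\,d\mu$. You stay inside the von Neumann algebra: decompose $\mathscr{A}$ into atomic and non-atomic summands by the join of all minimal projections, and pass to the support of a normal state to obtain faithfulness. Both routes arrive at the identical endgame $q_n=\sum_{p\in P_n}p\otimes p\downarrow 0$ versus $\delta(q_n)=1$; yours avoids the bookkeeping of Lemmas~\ref{lem:measure-zorn}--\ref{lem:measure-space-partition}, while the paper's makes the underlying measure-theoretic picture explicit.

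One small point deserves an extra sentence: the claim that non-atomicity alone yields refining partitions with $\max_{p\in P_n}\omega(p)\to 0$ is not automatic from ``every projection splits nontrivially,'' since an adversary could always split off a negligible sliver. You need the standard observation that for any nonzero projection $p$ some subprojection $q$ satisfies $0<\omega(q)\le\tfrac12\omega(p)$ (take any nontrivial $q\le p$; one of $q,p-q$ works), and then iterate to get arbitrarily fine pieces, or invoke the intermediate-value property for non-atomic $\omega$ to bisect exactly as the paper does in Lemma~\ref{lem:continuous-measure-space}. With that line added, your argument is complete.
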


\noindent
Thus, to interpret duplicable types,
we can really only use von Neumann algebras of the form $\linf(X)$.
It also follows that a von Neumann algebra
is duplicable precisely when it is a (commutative) monoid
in $\vNAMIU$, or in the 
symmetric monoidal category $\vNACPsU$ of von Neumann algebras
and normal completely positive subunital (CPsU) maps.

We further justify our choice,
$\sem{!A} = \linf(\nsp(\sem{A}))$,
by proving that $\linf(\nsp(\mathscr{A}))$
is the free (commutative) monoid on~$\mathscr{A}$ in~$\vNAMIU$.
As a corollary, we also obtain that $\linf(\vNACPsU(\mathscr{A},\mathbb{C}))$
is the free (commutative) monoid on~$\mathscr{A}$
in~$\vNACPsU$.

The paper is organised as follows.
Preliminaries are given in Section~\ref{sec:prelim},
and the proof of Theorem~\ref{thm:duplicable}
in Section~\ref{sec:characterisation-dup-vna}.
We study monoids in the SMCs $\vNAMIU$ and $\vNACPsU$
in Section~\ref{sec:monoids-in-vna},
proving that $\linf(\nsp(\mathscr{A}))$ is the free monoid.
We end the paper
with a discussion of variations
(Dauns' categorical tensor product, $C^*$-algebras),
and related work (on broadcasting and cloning of states)
in Section~\ref{sec:variation-and-related-work}.

\section{Preliminaries}
\label{sec:prelim}

We refer the reader
to the literature
for  details
on von Neumann algebras~\cite{Sakai1998,Takesaki1,kadison1997}
and complete positivity~\cite{paulsen2002}.
Given a von Neumann algebra~$\mathscr{A}$,
we write~$[0,1]_\mathscr{A} = \{a\in \mathscr{A}\mid 0\leq a\leq 1\}$,
and 
we shorten $1-a$ to~$a^\perp$ for all~$a\in [0,1]_\mathscr{A}$.

\subsection{Monoids in monoidal categories}

Let $(\catC,\otimes,I)$ be a symmetric monoidal category (SMC).
A \textbf{monoid} in $\catC$ is an object $A\in\catC$ with
a `multiplication' map
$m\colon A\otimes A\to A$
and a `unit' map $u\colon I\to A$
satisfying the associativity and the unit law,
i.e.\ making the following diagrams commute.
\[
\xymatrix@R-.5pc{
(A\otimes A)\otimes A
\ar[d]_{\alpha}
\ar[rr]^-{m\otimes \id}
&&
A\otimes A
\ar[d]^{m}
\\
A\otimes (A\otimes A)
\ar[r]_-{\id\otimes m}
&
A\otimes A
\ar[r]_-{m}
&
A
}
\qquad
\xymatrix@R-.5pc{
I\otimes A
\ar[dr]_{\lambda}
\ar[r]^-{u\otimes \id}
&
A\otimes A
\ar[d]^{m}
&
\ar[l]_-{\id\otimes u}
A\otimes I
\ar[dl]^{\rho}
\\
&
A
&
}
\]
Here $\alpha,\lambda,\rho$ respectively
denote the associativity isomorphism, and
the left and the right unit isomorphism.
A monoid $(A,m,u)$ is \textbf{commutative} if
$m\circ \gamma=m$,
where $\gamma\colon A\otimes A\to A\otimes A$ is the symmetry isomorphism.
A \textbf{monoid morphism} between monoids $(A_1,m_1,u_1)$
and $(A_2,m_2,u_2)$ is an arrow $f\colon A_1\to A_2$
that satisfies $m_2\circ (f\otimes f)=f \circ m_1$
and $u_2=f \circ u_1$.
We denote the category of monoids
and monoid morphisms in $\catC$
by $\Mon(\catC,\otimes,I)$ or simply by $\Mon(\catC)$.
We write $\CMon(\catC)\subseteq\Mon(\catC)$ for
the full subcategory of commutative monoids.


\subsection{The Model of the Quantum Lambda Calculus by von Neumann algebras}

We quickly review some details
of the authors' previous work~\cite{CW2016}
on a model of the quantum lambda calculus by von Neumann algebras,
because we will need them in this paper.
Let $(\Set,\times,1)$ be the cartesian monoidal category of sets and functions;
$(\vNAMIU,\otimes,\C)$ be the SMC of von Neumann algebras and normal MIU-maps;
and $(\vNACPsU,\otimes,\C)$ be the SMC of von Neumann algebras and normal CPsU-maps
($\otimes$ is the spatial tensor product).
The model is constituted by the following adjunctions.

\begin{equation}
\label{eq:monoidal-adjunctions}
\xymatrix@C=3pc{
(\Set^{\op},\times,1)
\ar@/_1.5ex/[r]_-{\linf}
\ar@{}[r]|-{\bot}
&
\ar@/_1.5ex/[l]_-{\nsp}
(\vNAMIU,\otimes,\C)
\ar@{^{ (}->}@/_1.5ex/[r]_{\incfun}
\ar@{}[r]|{\bot}
&
(\vNACPsU,\otimes,\C)
\ar@/_1.5ex/[l]_{\qcomp}}
\end{equation}

Although we do not use it in the present work,
it is worth mentioning that $\vNAMIU$ is a co-closed SMC~\cite{kornell2012},
which we used (together with the adjunction
$\mathcal{F}\dashv\mathcal{J}$) to interpret function types $A\limp B$
in~\cite{CW2016}.

As is explained in the introduction,
the left-hand side of \eqref{eq:monoidal-adjunctions} models the $\oc$ operator.
Recall that
$\nsp(\scrA)=\vNAMIU(\scrA,\C)$ and
$\linf(X)=\{f\colon X\to\C\mid \sup_x\abs{f(x)}<\infty\}$.
They are indeed functors via
$\nsp(g)(\varphi)=\varphi\circ g$ and $\linf(h)(f)=f\circ h$.
It is straightforward to check that
there is a (dual) adjunction $\nsp\dashv\linf$
via ``swapping arguments''
$g(a)(x)=h(x)(a)$ for normal MIU-maps $g\colon \mathscr{A}\to\ell^\infty(X)$
and functions $h\colon X\to \nsp(\mathscr{A})$.
In~\cite{CW2016} we further observed that:

\begin{lemma}
\label{lem:nsp-linf-ssm}
The functors $\nsp$ and $\linf$ are strong symmetric monoidal,
and there is a symmetric monoidal adjunction $\nsp\dashv\linf$.
Moreover the counit of the adjunction is an isomorphism,
i.e.\ $\nsp(\linf(X))\cong X$.
\qed
\end{lemma}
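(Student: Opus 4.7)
The plan is to establish the four claims of the lemma in a convenient order: first the counit isomorphism $\nsp(\linf(X))\cong X$, then the strong symmetric monoidality of $\linf$ and of $\nsp$, and finally the compatibility of the adjunction with the monoidal structures. For the counit, I would take the candidate bijection $X\to\nsp(\linf(X))$ sending $x$ to the evaluation map $\mathrm{ev}_x\colon f\mapsto f(x)$, which is clearly natural in $X$ and injective. For surjectivity, let $\varphi\colon\linf(X)\to\C$ be a normal MIU-map, and examine the pairwise-orthogonal projections $1_{\{x\}}\in\linf(X)$. Multiplicativity forces $\varphi(1_{\{x\}})\,\varphi(1_{\{y\}})=0$ for $x\ne y$, so at most one $x_0\in X$ satisfies $\varphi(1_{\{x_0\}})=1$, while normality applied to the bounded increasing net $\{1_S : S\subseteq X \text{ finite}\}$ (whose supremum is $1$) rules out the case where no such $x_0$ exists. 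Decomposing any $f\in\linf(X)$ as $f = f(x_0)\,1_{\{x_0\}} + f\cdot 1_{X\setminus\{x_0\}}$ and invoking multiplicativity then yields $\varphi=\mathrm{ev}_{x_0}$.

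Next I would verify strong symmetric monoidality of $\linf$: the isomorphism $\linf(1)\cong\C$ is immediate, and $\linf(X\times Y)\cong\linf(X)\otimes\linf(Y)$ is the standard identification of the spatial tensor product of the atomic commutative von Neumann algebras $\linf(X)$ and $\linf(Y)$ (acting on $\ell^2(X)$ and $\ell^2(Y)$ by multiplication), which I would cite from \cite{Sakai1998,Takesaki1}. For $\nsp$: the set $\nsp(\C)$ is a singleton since $\id_\C$ is the only normal MIU-map $\C\to\C$, while the map $\nsp(\scrA\otimes\scrB)\to\nsp(\scrA)\times\nsp(\scrB)$, $\varphi\mapsto\bigl(\varphi(-\otimes 1),\varphi(1\otimes -)\bigr)$, is a bijection. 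Surjectivity is witnessed by $(\alpha,\beta)\mapsto\alpha\otimes\beta\colon\scrA\otimes\scrB\to\C\otimes\C\cong\C$, which is a normal MIU-map for every pair. Injectivity follows from the multiplicativity identity $\varphi(a\otimes b)=\varphi((a\otimes 1)(1\otimes b))=\varphi(a\otimes 1)\,\varphi(1\otimes b)$, which determines $\varphi$ on the $\sigma$-weakly dense linear span of elementary tensors and hence on all of $\scrA\otimes\scrB$ by normality.

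Finally, to upgrade the hom-adjunction $\vNAMIU(\scrA,\linf(X))\cong\Set^{\op}(\nsp(\scrA),X)$ to a symmetric monoidal one, I would appeal to Kelly's doctrinal adjunction: once both adjoints are strong symmetric monoidal, the adjunction is symmetric monoidal provided the unit and counit are monoidal natural transformations, which is a routine check from the explicit ``swapping-arguments'' description of the bijection.

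I expect the main obstacle to be the normality step in the counit argument, where one must genuinely invoke preservation of suprema of bounded \emph{nets} (not just sequences) of projections in order to exclude the degenerate case in which $\varphi$ vanishes on all singleton indicators. The tensor identification $\linf(X\times Y)\cong\linf(X)\otimes\linf(Y)$ is likewise technical but well documented, and I would cite rather than reproduce it.
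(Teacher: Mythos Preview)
The paper does not actually prove this lemma: it is stated as a fact imported from the authors' earlier work~\cite{CW2016}, as the sentence preceding it and the bare \qed\ indicate. So there is no ``paper's own proof'' to compare against.

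Your argument is correct and fills in exactly the details one would want. The counit computation is right: multiplicativity on the orthogonal projections $1_{\{x\}}$ forces $\varphi(1_{\{x\}})\in\{0,1\}$ with at most one value equal to~$1$, and normality applied to the net of finite-support indicators rules out the all-zero case; the decomposition $f=f(x_0)1_{\{x_0\}}+f\cdot 1_{X\setminus\{x_0\}}$ then finishes it. The tensor identifications $\linf(X)\otimes\linf(Y)\cong\linf(X\times Y)$ and $\nsp(\scrA\otimes\scrB)\cong\nsp(\scrA)\times\nsp(\scrB)$ are handled correctly, the latter via $\varphi(a\otimes b)=\varphi(a\otimes 1)\,\varphi(1\otimes b)$ and ultraweak density of the algebraic tensor product.

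One small remark on the last step: what you need there is not really Kelly's doctrinal-adjunction theorem but just the definition of a (symmetric) monoidal adjunction---strong monoidal functors on both sides together with monoidal unit and counit. Kelly's result would instead let you start from the strong monoidal structure on~$\linf$ alone, obtain an induced oplax structure on the left adjoint~$\nsp$ automatically (with the unit and counit monoidal for free), and then merely check that this induced structure is strong. That route is marginally cleaner since it spares you the separate verification that the unit and counit are monoidal, but either way the content is the same.
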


\begin{corollary}[{\cite[Theorem~IV.3.1]{maclane1998}}]
\label{cor:linf-ff}
The right adjoint functor $\linf$ is full and faithful.
\qed
\end{corollary}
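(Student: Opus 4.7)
The plan is to appeal to a standard categorical result (MacLane, Theorem~IV.3.1): in any adjunction $F \dashv G$, the right adjoint $G$ is full and faithful if and only if the counit $\epsilon : FG \Rightarrow \id$ is a natural isomorphism. Since Lemma~\ref{lem:nsp-linf-ssm} has just established that the counit of $\nsp \dashv \linf$ is an isomorphism $\nsp(\linf(X)) \cong X$, the corollary drops out without further work.

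If a self-contained proof were desired, I would unwind the hom-set bijection $\vNAMIU(\scrA, \linf(Y)) \cong \Set(Y, \nsp(\scrA))$ from the adjunction, instantiate it at $\scrA = \linf(X)$, and compose with the counit isomorphism $\nsp(\linf(X)) \cong X$ to obtain a bijection $\vNAMIU(\linf(X), \linf(Y)) \cong \Set(Y, X) = \Set^{\op}(X, Y)$. A triangle-identity chase then identifies this composite bijection with the action of $\linf$ on hom-sets, so fullness and faithfulness fall out simultaneously.

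The main ``difficulty'' here is merely notational: one must be careful with the opposite-category conventions, since $\linf$ is drawn as a functor $\Set^{\op} \to \vNAMIU$, so an arrow $X \to Y$ in $\Set^{\op}$ is a function $Y \to X$ in $\Set$. Beyond keeping track of these orientations, the argument is purely formal and requires no input specific to von Neumann algebras.
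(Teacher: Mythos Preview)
Your proposal is correct and matches the paper's approach exactly: the paper simply cites MacLane's Theorem~IV.3.1 and the counit isomorphism from Lemma~\ref{lem:nsp-linf-ssm}, with no further argument given. Your optional self-contained unwinding is fine but unnecessary here.
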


The following is an important observation for the present work.

\begin{corollary}
\label{cor:linfX-cmon}
Von Neumann algebras
of the form $\linf(X)$
carry a commutative monoid structure in $\vNAMIU$.
\end{corollary}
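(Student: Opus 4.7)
The plan is to deduce the commutative monoid structure on $\linf(X)$ by transporting one that exists on $X$ across the strong symmetric monoidal functor $\linf$. First I would observe that in the cartesian monoidal category $(\Set,\times,1)$, every object $X$ carries a canonical commutative \emph{comonoid} structure, with comultiplication given by the diagonal $\Delta\colon X\to X\times X$ and counit the unique map $!\colon X\to 1$. Dualising, this is precisely a commutative monoid structure on $X$ in the opposite SMC $(\Set^{\op},\times,1)$, with multiplication $\Delta\colon X\times X\to X$ (in $\Set^{\op}$) and unit $!\colon 1\to X$ (in $\Set^{\op}$).

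Next I would invoke the general fact that any strong symmetric monoidal functor sends (commutative) monoids to (commutative) monoids: given such an $F\colon(\catC,\otimes,I)\to(\catD,\boxtimes,J)$ and a commutative monoid $(A,m,u)$ in $\catC$, the object $FA$ becomes a commutative monoid in $\catD$ via
\[
FA\boxtimes FA\ \xrightarrow{\ \cong\ }\ F(A\otimes A)\ \xrightarrow{Fm}\ FA,
\qquad
J\ \xrightarrow{\ \cong\ }\ FI\ \xrightarrow{Fu}\ FA,
\]
with the monoid axioms following from those of $A$ together with the coherence of the monoidal structure maps of $F$. By Lemma~\ref{lem:nsp-linf-ssm}, $\linf\colon(\Set^{\op},\times,1)\to(\vNAMIU,\otimes,\C)$ is strong symmetric monoidal, so applying this principle to the commutative monoid $X$ in $\Set^{\op}$ yields a commutative monoid structure on $\linf(X)$ in $\vNAMIU$.

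Unpacked, the multiplication $\linf(X)\otimes\linf(X)\to\linf(X)$ is the composite of the coherence iso $\linf(X)\otimes\linf(X)\cong\linf(X\times X)$ with $\linf(\Delta)$, which is pointwise multiplication of bounded functions; the unit $\C\to\linf(X)$ sends $\lambda$ to the constant function $\lambda$. Commutativity and associativity follow from commutativity/associativity of multiplication in $\C$, or equivalently from the corresponding properties of $\Delta$ in $\Set$.

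There is no substantive obstacle: the argument is entirely formal, relying only on the general functoriality of (commutative) monoid structures under strong symmetric monoidal functors, together with the strong symmetric monoidality of $\linf$ already established in Lemma~\ref{lem:nsp-linf-ssm}. The only mild subtlety is the switch to the opposite category, which turns the ubiquitous comonoid structure in $\Set$ into a monoid structure in $\Set^{\op}$ that $\linf$ can transport.
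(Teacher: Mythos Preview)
Your proposal is correct and follows exactly the same argument as the paper: transport the canonical commutative comonoid structure on $X$ in $\Set$ (equivalently, the commutative monoid structure in $\Set^{\op}$) along the strong symmetric monoidal functor $\linf$, invoking Lemma~\ref{lem:nsp-linf-ssm}. The paper's proof is terser and omits the explicit unpacking of the multiplication and unit, but the substance is identical.
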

\begin{proof}
Any set $X$ carries a canonical commutative comonoid structure
via the diagonal $X\to X\times X$ and the unique map $X\to 1$,
so that it is a commutative monoid in $(\Set^{\op},\times,1)$.
Because $\linf$ is strong symmetric monoidal,
it preserves the monoid structure as
\[
\linf(X)\otimes\linf(X)\cong
\linf(X\times X)\longrightarrow
\linf(X)
\qquad\qquad
\C\cong \linf(1)\longrightarrow
\linf(X)
\enspace.
\qedhere
\]
\end{proof}

Now we turn to the right-hand side of \eqref{eq:monoidal-adjunctions}.
The functor $\incfun\colon\vNAMIU\hookrightarrow\vNACPsU$
is the inclusion, which is strict symmetric monoidal.
It can be shown \cite{bram2014} via Freyd's Adjoint Functor Theorem that:

\begin{lemma}
\label{lem:adjoint-to-incl}
The inclusion $\incfun\colon\vNAMIU\hookrightarrow\vNACPsU$
has a left adjoint $\qcomp\colon \vNACPsU\to\vNAMIU$.
\qed
\end{lemma}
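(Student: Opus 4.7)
The plan is to apply Freyd's General Adjoint Functor Theorem to $\incfun\colon\vNAMIU\to\vNACPsU$. This requires three ingredients: that $\vNAMIU$ is complete, that $\incfun$ preserves small limits, and that $\incfun$ satisfies the solution set condition at every object $\mathscr{A}\in\vNACPsU$.

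First, I would verify completeness of $\vNAMIU$. Products are given by the $\ell^\infty$-direct product $\prod_i \mathscr{A}_i = \{(a_i) : \sup_i \|a_i\| < \infty\}$ with componentwise operations and projections; equalisers of two normal MIU-maps $f,g\colon \mathscr{A}\to\mathscr{B}$ are the von Neumann subalgebra $\{a\in\mathscr{A}: f(a)=g(a)\}\subseteq\mathscr{A}$, which is easily seen to be weak-$*$-closed and closed under the algebraic operations.

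Next, I would check that $\incfun$ preserves these limits. For products, given a family of normal CPsU maps $h_i\colon \mathscr{B}\to\mathscr{A}_i$, the tuple $b\mapsto (h_i(b))_i$ lands in $\prod_i\mathscr{A}_i$ because CPsU maps are contractive, is CPsU componentwise, and is normal because suprema in the direct product are computed componentwise. Uniqueness is immediate. Preservation of equalisers is similarly routine: a normal CPsU map into $\mathscr{A}$ equalising $f$ and $g$ has image in the equaliser subalgebra, and the factorisation is a CPsU map there.

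The main obstacle, and the core of the proof, is the solution set condition. For fixed $\mathscr{A}\in\vNACPsU$, I would argue that every CPsU map $f\colon \mathscr{A}\to\incfun(\mathscr{B})$ factors as $f=\incfun(\iota_f)\circ f'$, where $\mathscr{B}_f\subseteq\mathscr{B}$ is the von Neumann subalgebra generated by $f(\mathscr{A})$, $f'\colon \mathscr{A}\to\incfun(\mathscr{B}_f)$ is the corestriction, and $\iota_f\colon \mathscr{B}_f\hookrightarrow\mathscr{B}$ is a normal MIU-map. The $*$-algebra generated by $f(\mathscr{A})$ has cardinality at most $\max(|\mathscr{A}|,\aleph_0)$, and a standard bound on the size of the weak-$*$-closure shows $|\mathscr{B}_f|$ is bounded by a cardinal depending only on $|\mathscr{A}|$. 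Hence up to isomorphism only a set of $\mathscr{B}_f$ can arise, and for each such $\mathscr{B}'$ the hom-class $\vNACPsU(\mathscr{A},\incfun(\mathscr{B}'))$ is a set; taking one factorisation per $f$ yields the solution set. I expect the trickiest step to be making this cardinality estimate on the generated von Neumann algebra rigorous; once it is in place, Freyd's theorem delivers $\qcomp$ automatically.
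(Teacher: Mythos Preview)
Your proposal is correct and follows essentially the same approach as the paper: the paper does not give a detailed argument but simply states that the result ``can be shown \cite{bram2014} via Freyd's Adjoint Functor Theorem,'' and you have filled in exactly that outline (completeness of $\vNAMIU$, limit preservation by $\incfun$, and the solution set condition via a cardinality bound on the generated von Neumann subalgebra). Your identification of the cardinality estimate as the delicate point is apt; that is indeed where the work lies in the cited reference.
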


We remark that the Kleisli category of
the comonad $\qcomp\incfun$ on $\vNAMIU$
is isomorphic to $\vNACPsU$.
Thus the right-hand side of \eqref{eq:monoidal-adjunctions}
is almost a model of Moggi's computational lambda calculi.
That is why our model can accommodate \emph{quantum operations},
which are in general not MIU but CPsU-maps.

\section{Characterisation of Duplicable von Neumann Algebras}
\label{sec:characterisation-dup-vna}

We will prove Theorem~\ref{thm:duplicable} in this section.
Let us give
a rough sketch of the proof.
To show that every duplicable von Neumann algebra
is MIU-isomorphic to~$\linf(X)$ for some set~$X$,
we first prove that~$\mathscr{A}$
is Abelian (in Lemma~\ref{lem:uniqueness-duplicator}).
This reduces the situation
to a measure theoretic problem,
because~$\mathscr{A} \cong L^\infty(X)$
for some appropriate (i.e.~localisable) measure space~$X$.
For simplicity,
we only consider
measure spaces with~$\mu(X)<\infty$
(which are localisable).
This restriction turns out to be harmless
(in the proof of Theorem~\ref{thm:duplicable}).
The measure space~$X$ splits
in a discrete~$D$ and a continuous part~$C$,
so~$X\cong D\oplus C$
(see Lemma~\ref{lem:measure-space-continuous-discrete}).
Since~$L^\infty(D)\cong \linf(D')$
for some set~$D'$,
and $L^\infty(X)\cong \linf(D')\oplus L^\infty(C)$,
our task will be  to show that~$L^\infty(C)=\{0\}$.
Since~$L^\infty(X)$ is duplicable,
we will see that~$L^\infty(C)$ is duplicable as well
(see Corollary~\ref{cor:duplicable-product}).
Thus the crux of the proof is that~$L^\infty(C)$
cannot be duplicable unless~$\mu(C)=0$
(see Lemma~\ref{lem:continuous-measure-space}).

\label{SS:abelian}
\begin{lemma}
\label{lem:unit-duplicator}
Let~$\delta$
be a duplicator 
with unit~$u$
on a von Neumann algebra~$\mathscr{A}$.
Then~$u=1$ and~$\delta(1\otimes 1)=1$.
\end{lemma}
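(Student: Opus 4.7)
The plan is to extract everything from three ingredients: the two unit equations $\delta(a\otimes u)=a=\delta(u\otimes a)$, the subunitality of $\delta$ (which gives $\delta(1\otimes 1)\le 1$), and positivity of $\delta$ (so that $x\le y$ in $\mathscr{A}\otimes\mathscr{A}$ implies $\delta(x)\le \delta(y)$). No deeper structure of von Neumann algebras is needed at this stage.

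First I would instantiate the defining equation at $a=1$, obtaining $\delta(1\otimes u)=1=\delta(u\otimes 1)$. Then I would show $\delta(1\otimes 1)=1$ by a sandwich argument: since $0\le u\le 1$ we have $1\otimes u\le 1\otimes 1$, so positivity yields $1=\delta(1\otimes u)\le \delta(1\otimes 1)$, while subunitality of $\delta$ gives $\delta(1\otimes 1)\le 1$.

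Next, to pin down $u=1$, set $u^\perp = 1-u\in [0,1]_{\mathscr{A}}$. By linearity,
\[
\delta(1\otimes u^\perp)\ =\ \delta(1\otimes 1)-\delta(1\otimes u)\ =\ 1-1\ =\ 0.
\]
Now apply positivity once more: from $u\le 1$ we get $u\otimes u^\perp\le 1\otimes u^\perp$, hence $\delta(u\otimes u^\perp)\le \delta(1\otimes u^\perp)=0$. But the other unit equation gives $\delta(u\otimes u^\perp)=u^\perp$, so $u^\perp\le 0$. Since also $u^\perp\ge 0$, we conclude $u^\perp=0$, i.e.\ $u=1$.

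There is no real obstacle here; the only thing to be careful about is remembering that $\delta$ is only assumed positive and subunital (not unital or completely positive), which is precisely why one needs to bootstrap $\delta(1\otimes 1)=1$ from the unit equations before eliminating $u^\perp$.
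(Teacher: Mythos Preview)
Your proof is correct and follows essentially the same route as the paper's: first squeeze $\delta(1\otimes 1)=1$ from positivity and subunitality, deduce that $\delta$ kills the ``defect'' tensor, then use a unit equation to identify that value with~$u^\perp$. The only difference is a harmless left/right swap: the paper works with $\delta(u^\perp\otimes 1)=0$ and $u^\perp=\delta(u^\perp\otimes u)\le \delta(u^\perp\otimes 1)$, whereas you use $\delta(1\otimes u^\perp)=0$ and $u^\perp=\delta(u\otimes u^\perp)\le \delta(1\otimes u^\perp)$.
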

\begin{proof}
Since~$1=\delta(u\otimes 1)\leq \delta(1\otimes 1) \leq 1$,
we have $\delta(u^\perp\otimes 1)=0$.
But then~$u^\perp=0$, and thus~$u=1$,
because  $u^\perp = \delta(u^\perp \otimes u)
\leq \delta(u^\perp \otimes 1) = 0$.
Hence~$u=1$.
Thus~$1=\delta(1\otimes u)=\delta(1\otimes 1)$.
\end{proof}

The following consequence
of Tomiyama's theorem
is based on Lemma~8.3 of~\cite{ndlmcs}.
\begin{lemma}
\label{lem:sef-instrument}
Let~$\mathscr{A}$ be a unital $C^*$-algebra,
and let~$f\colon \mathscr{A}\oplus\mathscr{A}\to \mathscr{A}$
be a positive unital map 
with $f(a,a)=a$ for all~$a\in \mathscr{A}$.
Then $p:=f(1,0)$ is central,
and
for all~$a,b\in\mathscr{A}$
	we have
$f(a,b) \,=\, ap\,+\, bp^\perp$.
\end{lemma}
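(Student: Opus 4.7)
The plan is to reduce the statement to Tomiyama's theorem on conditional expectations by lifting $f$ to an idempotent on $\scrA \oplus \scrA$. I would define $E\colon \scrA\oplus\scrA \to \scrA\oplus\scrA$ by $E(a,b) := (f(a,b), f(a,b))$, and verify that $E$ is positive, unital, and idempotent; idempotency is exactly the hypothesis $f(a,a) = a$ applied after one step of $E$. The image of $E$ is the diagonal $C^*$-subalgebra $\Delta(\scrA) = \{(a,a) : a \in \scrA\}$ of $\scrA\oplus\scrA$. Since a positive unital map between unital $C^*$-algebras automatically has norm one, $E$ is a norm-one projection onto a $C^*$-subalgebra. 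Tomiyama's theorem then identifies $E$ as a conditional expectation, so in particular it satisfies the bimodule identity $E(\alpha\, x\, \beta) = \alpha\, E(x)\, \beta$ for all $\alpha, \beta \in \Delta(\scrA)$ and $x \in \scrA \oplus \scrA$.

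Next I would extract the formula by unpacking this bimodule property at the element $p := f(1,0)$. First, $p \in [0,1]_\scrA$ since $f$ is positive and unital and $(1,0) \le (1,1)$ in $\scrA \oplus \scrA$, and $p^\perp = f(0,1)$ by linearity together with $f(1,1) = 1$. Evaluating the bimodule identity with $x = (1,0)$, $\alpha = (a,a)$, $\beta = (1,1)$ gives $f(a,0) = ap$; evaluating instead with $\alpha = (1,1)$, $\beta = (a,a)$ gives $f(a,0) = pa$. Hence $ap = pa$ for every $a \in \scrA$, i.e.\ $p$ is central. The analogous computation with $x = (0,1)$ yields $f(0,b) = bp^\perp$. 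Linearity of $f$ and the decomposition $(a,b) = (a,0) + (0,b)$ then produce the desired formula $f(a,b) = ap + bp^\perp$.

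The only real obstacle I anticipate is setting up the lift $E$ so that Tomiyama's theorem applies with its standard hypotheses; once that is in place, everything reduces to algebraic bookkeeping with the bimodule identity. It is worth noting that we need not show $p$ is a projection in the spectral sense: the paper defines $p^\perp := 1-p$ for every $p \in [0,1]_\scrA$, so centrality of $p$ together with the identity $f(a,b) = ap + bp^\perp$ already matches the statement of the lemma verbatim.
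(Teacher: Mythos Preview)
Your proposal is correct and follows essentially the same route as the paper: both invoke Tomiyama's theorem to obtain the bimodule identity $a\,f(c,d)\,b = f(acb,adb)$ (equivalently, $E(\alpha x \beta)=\alpha E(x)\beta$ for diagonal $\alpha,\beta$), and then specialise at $(1,0)$ and $(0,1)$ to read off centrality of $p$ and the formula $f(a,b)=ap+bp^\perp$. The only difference is cosmetic: the paper states the bimodule identity for $f$ directly (implicitly identifying $\scrA$ with the diagonal of $\scrA\oplus\scrA$), whereas you make the conditional-expectation structure explicit via the lift $E$.
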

\begin{proof}
By Tomiyama's theorem we have,
for all~$a,b,c,d\in\mathscr{A}$,
\begin{equation*}
a \,f(c,d)\, b \ = \ f(\,acb\,,\,adb\,).
\end{equation*}
In particular,
for all~$a\in\mathscr{A}$,
we have $ap=af(1,0)=f(a,0)=f(1,0)a=pa$.
Thus~$p$ is central.
Similarly, $f(0,b)=bp^\perp$
for all~$b\in\mathscr{A}$.
Then~$f(a,b)=f(a,0)+f(0,b)=ap+bp^\perp$
for all~$a,b\in\mathscr{A}$.
\end{proof}

\begin{lemma}
\label{lem:uniqueness-duplicator}
Let $\delta\colon\mathscr{A}\otimes \mathscr{A}\to\mathscr{A}$
be a duplicator on a von Neumann algebra~$\mathscr{A}$.
Then~$\mathscr{A}$ is Abelian and~$\delta(a\otimes b)=a\cdot b$
for all~$a,b\in\mathscr{A}$.
\end{lemma}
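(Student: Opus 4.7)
The plan is to package the duplicator together with a projection so that Lemma~\ref{lem:sef-instrument} becomes applicable. For each projection $p \in \mathscr{A}$, I would consider the map $f_p \colon \mathscr{A} \oplus \mathscr{A} \to \mathscr{A}$ defined by
\[
f_p(a,b) \ := \ \delta(a \otimes p + b \otimes p^\perp).
\]
Linearity is immediate, positivity follows because $a,b \geq 0$ forces $a\otimes p$ and $b\otimes p^\perp$ to be positive, and unitality together with the diagonal condition $f_p(a,a) = a$ both collapse to $\delta(\cdot \otimes 1) = \id_{\mathscr{A}}$ thanks to $u=1$ from Lemma~\ref{lem:unit-duplicator}. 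Note also $f_p(1,0) = \delta(1 \otimes p) = p$.

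So Lemma~\ref{lem:sef-instrument} tells me two things at once: first, $p$ is central in $\mathscr{A}$; and second, $\delta(a \otimes p + b \otimes p^\perp) = ap + bp^\perp$. The first statement holds for \emph{every} projection $p$, and since every von Neumann algebra is generated by its projections (via the spectral theorem), this forces $\mathscr{A}$ to be Abelian. The second statement, specialised to $b = 0$, yields $\delta(a \otimes p) = a p$ for every $a \in \mathscr{A}$ and every projection $p \in \mathscr{A}$.

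To pass from projections to arbitrary $b$, I would fix $a$ and observe that the slice $b \mapsto \delta(a \otimes b)$ is a normal linear map $\mathscr{A} \to \mathscr{A}$ that agrees with $b \mapsto a b$ on all projections, hence on their linear span, which is ultraweakly dense in $\mathscr{A}$. Normality then extends the equality to all of $\mathscr{A}$, giving $\delta(a \otimes b) = a \cdot b$.

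I expect the main obstacle to be conceptual rather than technical: the move of splitting $1 = p + p^\perp$ \emph{inside} the second tensor factor in order to fabricate a Tomiyama-admissible map from an arbitrary duplicator is the key idea, and I do not see a direct route to centrality of projections without an auxiliary construction of this kind. Once this step is taken, everything else reduces to bookkeeping: verifying the hypotheses of Lemma~\ref{lem:sef-instrument}, invoking the spectral theorem, and using normality of $\delta$.
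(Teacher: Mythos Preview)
Your proposal is correct and follows essentially the same route as the paper's proof: define $f_p(a,b)=\delta(a\otimes p + b\otimes p^\perp)$, verify the hypotheses of Lemma~\ref{lem:sef-instrument}, and read off both the centrality of~$p$ and the formula $\delta(a\otimes p)=ap$. The only cosmetic difference is that the paper takes $p\in[0,1]_{\mathscr{A}}$ rather than a projection, so the extension to arbitrary~$b$ follows immediately by linearity (every element is a linear combination of four effects) rather than via normality and ultraweak density of projections; both closures are routine.
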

\begin{proof}
We must show that all~$a\in\mathscr{A}$ are central.
It suffices
to show that all~$p\in [0,1]_\mathscr{A}$ are central
(by the usual reasoning).
Similarly, 
we only need to prove that $\delta(a\otimes p) = a\cdot p$
for all~$a\in\mathscr{A}$ and $p\in [0,1]_\mathscr{A}$.

Let~$p\in[0,1]_\mathscr{A}$ be given.
Define~$f\colon \mathscr{A}\oplus\mathscr{A}\to\mathscr{A}$
by $f(a,b) = \delta(a\otimes p+b\otimes p^\perp)$
for all~$a,b\in\mathscr{A}$.
Then~$f$ is positive, unital,
$f(1,0)=p$,
and 
$f(a,a)=a$
for all~$a\in \mathscr{A}$.
Thus, by Lemma~\ref{lem:sef-instrument},
we get that~$p$ is central,
and  $f(a,b)=ap+bp^\perp$ for all~$a,b\in\mathscr{A}$.
Then~$a\cdot p=f(a,0)=\delta(a \otimes p)$.
\end{proof}
\begin{remark}
The special
case of Lemma~\ref{lem:uniqueness-duplicator}
in which~$\delta$ is \emph{completely} positive
can be found
in the literature,
see for example
Theorem~6 of~\cite{Maassen2010}
(in which~$\mathscr{A}$ is also finite dimensional).
\end{remark}
\begin{corollary}
\label{cor:duplicability-multiplication}
Let~$\mathscr{A}$
be a von Neumann algebra.
Then~$\mathscr{A}$
is duplicable
iff there is
a normal positive linear map $\delta\colon\mathscr{A}\otimes\mathscr{A}
\to \mathscr{A}$
with $\mu(a\otimes b)=a\cdot b$ 
for all~$a,b\in\mathscr{A}$.
\end{corollary}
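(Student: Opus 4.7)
The plan is that this corollary is essentially immediate from the two preceding lemmas; no new ideas are needed. The content is really just a repackaging: the assumption of being subunital with a unit can be replaced by the cleaner statement that $\delta$ extends multiplication, because these two conditions turn out to be equivalent in the presence of a duplicator.

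For the direction ($\Rightarrow$), suppose $\mathscr{A}$ is duplicable with duplicator $(\delta,u)$. Lemma \ref{lem:uniqueness-duplicator} already yields $\delta(a\otimes b)=a\cdot b$ for all $a,b\in\mathscr{A}$, while normality, linearity, and positivity of $\delta$ are built into the very definition of a duplicator. So $\delta$ itself witnesses the right-hand side.

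For the direction ($\Leftarrow$), suppose we have a normal positive linear map $\delta\colon\mathscr{A}\otimes\mathscr{A}\to\mathscr{A}$ satisfying $\delta(a\otimes b)=a\cdot b$. I will show that $(\delta,1)$ is a duplicator. Clearly $1\in[0,1]_\mathscr{A}$, and $\delta(1\otimes 1)=1\cdot 1=1$ shows $\delta$ is unital, hence a fortiori subunital. The unit equations reduce to $\delta(a\otimes 1)=a\cdot 1=a=1\cdot a=\delta(1\otimes a)$, which follow directly from the hypothesis. Hence $\mathscr{A}$ is duplicable.

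I do not anticipate any obstacle. The force of the corollary is conceptual rather than technical: it shows that the subunitality and unit axioms in the definition of duplicator are automatic once one has a normal positive linear extension of the multiplication to the spatial tensor product. This streamlined criterion should then be the convenient form to feed into the measure-theoretic reduction in the proof of Theorem \ref{thm:duplicable}.
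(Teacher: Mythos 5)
Your proposal is correct and matches the argument the paper intends (the corollary is stated without an explicit proof precisely because it follows this way): the forward direction is Lemma~\ref{lem:uniqueness-duplicator} plus the definition of a duplicator, and the backward direction takes $u=1$, noting $\delta(1\otimes 1)=1$ gives (sub)unitality and $\delta(a\otimes 1)=a=\delta(1\otimes a)$ gives the unit laws. No gaps; the only cosmetic remark is that the ``$\mu$'' in the statement is a typo for~$\delta$, which you implicitly and correctly read as such.
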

\begin{corollary}
\label{cor:duplicable-product}
Von Neumann algebras~$\mathscr{A}$ and~$\mathscr{B}$
are duplicable
when  $\mathscr{A}\oplus \mathscr{B}$ is duplicable.
\end{corollary}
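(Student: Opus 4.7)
The plan is to apply Corollary~\ref{cor:duplicability-multiplication}, so it suffices to construct, for each summand, a normal positive linear map on the spatial tensor square that extends the algebra's multiplication. The natural candidate is obtained by sandwiching the given duplicator between the summand inclusion and projection.

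Concretely, let $\delta$ be a duplicator on $\mathscr{A}\oplus\mathscr{B}$. By Lemma~\ref{lem:uniqueness-duplicator}, $\mathscr{A}\oplus\mathscr{B}$ is Abelian (hence so is each summand) and $\delta$ equals multiplication. Let $\iota\colon\mathscr{A}\to\mathscr{A}\oplus\mathscr{B}$, $a\mapsto(a,0)$, be the first-factor inclusion, and $\pi\colon\mathscr{A}\oplus\mathscr{B}\to\mathscr{A}$ the first-factor projection; both are normal $*$-homomorphisms, and $\pi$ is unital. Since $\iota$ is normal completely positive, the spatial tensor product yields a normal CP map $\iota\otimes\iota\colon\mathscr{A}\otimes\mathscr{A}\to(\mathscr{A}\oplus\mathscr{B})\otimes(\mathscr{A}\oplus\mathscr{B})$. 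Define
\[
\delta_\mathscr{A}\,:=\,\pi\circ\delta\circ(\iota\otimes\iota)\colon \mathscr{A}\otimes\mathscr{A}\longrightarrow\mathscr{A}.
\]
This is a normal positive linear map, and on elementary tensors
\[
\delta_\mathscr{A}(a\otimes a')\,=\,\pi\bigl((a,0)\cdot(a',0)\bigr)\,=\,a\cdot a',
\]
so Corollary~\ref{cor:duplicability-multiplication} delivers duplicability of $\mathscr{A}$. The same argument, with the roles of $\mathscr{A}$ and $\mathscr{B}$ swapped, handles $\mathscr{B}$.

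The only minor subtlety is that $\iota$ is not unital, so is not a morphism in $\vNAMIU$; what is needed is functoriality of the spatial tensor product along normal CP maps (or, equivalently here, along normal $*$-homomorphisms), which is standard. Alternatively, since $\mathscr{A}\oplus\mathscr{B}$ is Abelian, one can simply identify $\mathscr{A}\otimes\mathscr{A}$ with a direct summand of $(\mathscr{A}\oplus\mathscr{B})\otimes(\mathscr{A}\oplus\mathscr{B})$ (corresponding to the central projection $(1_\mathscr{A},0)\otimes(1_\mathscr{A},0)$) and restrict $\delta$ accordingly, bypassing any tensoring of non-unital maps.
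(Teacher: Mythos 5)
Your proof is correct and is essentially the paper's own argument: sandwich the duplicator between $\kappa_1\otimes\kappa_1$ and the projection $\pi_1$, compute that the composite is multiplication on $\mathscr{A}$, and invoke Corollary~\ref{cor:duplicability-multiplication}. Your remark that the inclusion $a\mapsto(a,0)$ is not unital (only a normal $*$-homomorphism, so one needs functoriality of $\otimes$ for normal CP maps) is a sound observation --- the paper in fact glosses over this by calling $\kappa_1$ a normal MIU-map --- so no changes are needed.
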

\begin{proof}
Let $\delta\colon (\mathscr{A}\oplus\mathscr{B})\otimes
(\mathscr{A}\oplus\mathscr{B})\longrightarrow
\mathscr{A}\oplus\mathscr{B}$
be a duplicator on~$\mathscr{A}\oplus\mathscr{B}$.
By Lemma~\ref{lem:uniqueness-duplicator}
we know that~$\mathscr{A}\oplus\mathscr{B}$
is Abelian,
and that $\delta((a_1,b_1)\otimes (a_2,b_2))
= (a_1a_2,b_1b_2)$
for all $a_1,a_2\in\mathscr{A}$
and~$b_1,b_2\in\mathscr{B}$.

Let~$\kappa_1\colon \mathscr{A}\to\mathscr{A}\oplus\mathscr{B}$
be the normal MIU-map
given by~$\kappa_1(a)=(a,0)$ for all~$a\in\mathscr{A}$.
Let~$\delta_\mathscr{A}$ be the composition of
$\xymatrix@C=3em{
\mathscr{A}\otimes\mathscr{A}
\ar[r]|-{\kappa_1\otimes\kappa_1}
&
(\mathscr{A}\oplus\mathscr{B})
\otimes
(\mathscr{A}\oplus\mathscr{B})
\ar[r]|-{\delta}
&
\mathscr{A}\oplus\mathscr{B}
\ar[r]|-{\pi_1}
&
\mathscr{A}
}$.
Then~$\delta_\mathscr{A}$ is normal, positive,
and
$\delta_\mathscr{A}(a_1\otimes a_2)
\,=\,  \pi_1(\delta((a_1,0)\otimes (a_2,0))) 
\,=\, \pi_1(a_1a_2,0)\,=\,a_1a_2$
for all~$a_1,a_2\in\mathscr{A}$.
Thus, by Corollary~\ref{cor:duplicability-multiplication},
$\mathscr{A}$
is duplicable.
\end{proof}

We will now work towards
the proof that
if~$C$ is a finite measure space,
then
$L^\infty(C)$
cannot be duplicable 
unless~$\mu(C)=0$,
see Lemma~\ref{lem:continuous-finite-measure-space-not-duplicable}.
Let us first fix some terminology
from measure theory (see~\cite{Fremlin2000}).

\begin{definition}
\label{def:measure-space}
Let~$X$ be a measure space.
\begin{enumerate}
\item
The measurable subsets of~$X$
are denoted by~$\Sigma_X$.
\item
A measurable subset~$A$ of~$X$
is \textbf{atomic}
if $0<\mu(A)<\infty$,
and $\mu(A')=\mu(A)$ for all~$A'\in\Sigma_X$
with~$A'\subseteq A$ and~$\mu(A')>0$.

\item
$X$ is \textbf{discrete} if $X$ is covered by atomic measurable subsets.

\item
$X$ is \textbf{continuous}
if~$X$ contains no atomic subsets.
\end{enumerate}
\end{definition}
\begin{definition}
Given a measure space~$X$
with~$\mu(X)<\infty$, 
let
$L^\infty(X)$
denote the von Neumann algebra
of bounded functions $f\colon X\to\mathbb{C}$.
Two such functions $f$ and~$g$ are identified in $L^\infty(X)$
when $f(x)=g(x)$ for almost all~$x\in X$.
Multiplication, addition, involution in~$L^\infty(X)$
are all computer coordinatewise,
and the norm~$\|f\|$ of~$f\in L^\infty(X)$
is the least number $r>0$ such that $|f(x)|\leq r$
for almost all~$x\in X$.
(For more details,
see e.g.~Example~IX/7.2 of~\cite{conway2007}.)
\end{definition}

The following lemma,
which will be very useful,
is a variation on
Zorn's Lemma
that does not require the axiom of choice.
\begin{lemma}
\label{lem:measure-zorn}
Let~$X$ be a measure space with $\mu(X)<\infty$.
Let~$\mathcal{S}$
be a collection of measurable subsets of~$X$
such that~$\bigcup_n A_n\in\mathcal{S}$
for all $A_1\subseteq A_2 \subseteq \dotsb$
in~$\mathcal{S}$.
Then for all~$A\in\mathcal{S}$,
there is~$B\in\mathcal{S}$
with $A\subseteq B$
which is maximal in the sense
that $\mu(B')=\mu(B)$
for all~$B'\in\mathcal{S}$ with $B\subseteq B'$.
\end{lemma}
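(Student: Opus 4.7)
The plan is to mimic the proof of Zorn's lemma, but restricted to a countable ascending chain so that the closure hypothesis on $\mathcal{S}$ suffices and only dependent choice---not full AC---is needed. Starting from $A$, the construction will at each stage pick a larger element of $\mathcal{S}$ whose measure is within $\tfrac{1}{n+1}$ of the supremum of measures achievable by elements of $\mathcal{S}$ above the current stage. Finiteness of $\mu(X)$ is what guarantees these suprema are finite, and hence approximable.

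Concretely, I would set $A_0 := A$ and, given $A_n \in \mathcal{S}$ with $A \subseteq A_n$, define
\[
s_n \;:=\; \sup\{\mu(C) \,:\, C\in\mathcal{S},\ A_n\subseteq C\}\,,
\]
which is finite because $s_n \leq \mu(X) < \infty$. By the definition of supremum there is some $A_{n+1}\in\mathcal{S}$ with $A_n \subseteq A_{n+1}$ and $\mu(A_{n+1}) > s_n - \tfrac{1}{n+1}$; picking one such $A_{n+1}$ at each stage uses dependent choice. Note that every candidate above $A_{n+1}$ is also a candidate above $A_n$, so $s_{n+1}\leq s_n$. The resulting chain $A_0 \subseteq A_1 \subseteq \dotsb$ lies in $\mathcal{S}$, so by hypothesis $B := \bigcup_n A_n$ belongs to $\mathcal{S}$ and extends $A$.

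To verify maximality, let $B' \in \mathcal{S}$ with $B \subseteq B'$. Since $A_n \subseteq B' \in \mathcal{S}$ for every $n$, $B'$ is itself a candidate for $s_n$, so $\mu(B') \leq s_n$. Meanwhile $A_{n+1}\subseteq B$, combined with the construction, gives $s_n < \mu(A_{n+1}) + \tfrac{1}{n+1} \leq \mu(B) + \tfrac{1}{n+1}$. Putting the two inequalities together yields $\mu(B') \leq \mu(B) + \tfrac{1}{n+1}$ for every $n$, hence $\mu(B') \leq \mu(B)$; together with monotonicity from $B\subseteq B'$, this forces $\mu(B') = \mu(B)$.

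There is no real obstacle; the only delicate point is ensuring $s_n$ is finite, which is precisely where the hypothesis $\mu(X) < \infty$ enters. Avoidance of full AC comes from the fact that only countable dependent choice is needed to extract the sequence $(A_n)$, in contrast to the transfinite recursion underlying Zorn's lemma.
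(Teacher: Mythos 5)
Your proof is correct and follows essentially the same route as the paper: define the supremum $s_n$ of measures of elements of $\mathcal{S}$ above the current set, pick an almost-optimal successor at each stage, take the union of the resulting chain, and conclude maximality from the squeeze $\mu(B')\leq s_n < \mu(B)+\tfrac{1}{n+1}$. The only (cosmetic) difference is that the paper phrases the conclusion as $\mu(B)=\beta_B$ for its auxiliary quantity $\beta_B$, whereas you verify maximality directly; your remark that the selection uses dependent rather than full choice is also consistent with the paper's intent.
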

\begin{proof}
The trick is to consider for every $C\in\Sigma_X$
the quantity
$\beta_C \ = \ \sup\{\,\mu(D)\mid C\subseteq D
\text{ and }D\in \mathcal{S}\,\}$.

Note that $\mu(C)\leq \beta_C \leq \mu(X)$
for all~$C\in\Sigma_X$,
	and $\beta_{C_2}\leq \beta_{C_1}$
	for all~$C_1,C_2\in\Sigma_X$ with $C_1 \subseteq C_2$.
To prove this lemma, it suffices to find~$B\in\Sigma_X$
with $A\subseteq B$ and~$\mu(B)=\beta_B$.

Define~$B_1:= B$.
Pick~$B_2\in\mathcal{S}$
such that $B_1 \subseteq B_2$
and~$\beta_{B_1}-\mu(B_2)\leq \nicefrac{1}{2}$.
Pick~$B_3\in\mathcal{S}$
such that $B_2\subseteq B_3$
and~$\beta_{B_2}-\mu(B_3) \leq \nicefrac{1}{3}$.
Proceeding in this fashion,
we get a sequence $B\equiv B_1\subseteq B_2 \subseteq \dotsb$
in~$\mathcal{S}$
with $\beta_{B_{n}}-\mu(B_{n+1})\leq \nicefrac{1}{n}$
for all~$n\in\mathbb{N}$.
Define~$B:=\bigcup_n B_n$.
Then~$B\in \mathcal{S}$.
Moreover,
\begin{equation*}
\mu(B_1)\,\leq\, \mu(B_2)\,\leq\,
\dotsb \,\leq\,\mu(B)\,\leq\, \beta_B \,\leq\, \dotsb
\,\leq\, \beta_{B_2}\,\leq\, \beta_{B_1}.
\end{equation*}
Since for every~$n\in\mathbb{N}$
we have both $\mu(B_{n+1})\leq \mu(B)\leq \beta_B \leq \beta_{B_n}$
and $\beta_{B_n}- \mu(B_{n+1}) \leq \nicefrac{1}{n}$,
we get $\beta_B-\mu(B)\leq \nicefrac{1}{n}$,
and so~$\beta_B = \mu(B)$.
\end{proof}

\begin{lemma}
\label{lem:measure-space-continuous-discrete}
Let~$X$ be a measure space with~$\mu(X)<\infty$.
Then there is a measurable subset~$D\subseteq X$
such that~$D$ is discrete
and~$X\backslash D$ is continuous.
\end{lemma}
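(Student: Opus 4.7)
The plan is to apply Lemma~\ref{lem:measure-zorn} to the collection
\[
\mathcal{S} \ = \ \{\,A\in\Sigma_X\mid A\text{ is a countable union of atomic measurable subsets of }X\,\}.
\]
I regard $\emptyset\in\mathcal{S}$ (as the empty union) and first verify that $\mathcal{S}$ is closed under countable increasing unions: given a chain $A_1\subseteq A_2\subseteq\dotsb$ in $\mathcal{S}$, writing each $A_n=\bigcup_k E_{n,k}$ with $E_{n,k}$ atomic in~$X$, the double union $\bigcup_n A_n=\bigcup_{n,k}E_{n,k}$ is again a countable union of atomic subsets, and hence in~$\mathcal{S}$.

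Applying Lemma~\ref{lem:measure-zorn} starting from $\emptyset\in\mathcal{S}$ yields some $D\in\mathcal{S}$ which is maximal in the sense that $\mu(B')=\mu(D)$ for every $B'\in\mathcal{S}$ with $D\subseteq B'$. I claim this $D$ does the job. By construction $D=\bigcup_n E_n$ for some atomic measurable subsets $E_n\subseteq X$ contained in~$D$. Since the induced measure on the measurable subspace $D\subseteq X$ is the restriction of~$\mu$, each $E_n$ is also an atomic measurable subset of~$D$, so $D$ is covered by atomic measurable subsets and therefore discrete.

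For the second half I argue by contradiction: suppose $X\backslash D$ were not continuous, so it contains an atomic measurable subset~$A$. Since $A$ is measurable in~$X$ (because $X\backslash D$ is), and the measure of its subsets is unchanged when viewed inside~$X$, the set $A$ is atomic in~$X$ as well. Then $D\cup A\in\mathcal{S}$ and $D\subseteq D\cup A$, while
\[
\mu(D\cup A)\ =\ \mu(D)+\mu(A)\ >\ \mu(D)
\]
because $A$ and $D$ are disjoint and $\mu(A)>0$. This contradicts the maximality of~$D$ granted by Lemma~\ref{lem:measure-zorn}, and so $X\backslash D$ is continuous.

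The one point that requires care, rather than being a genuine obstacle, is the mild bookkeeping around atomicity: atomic subsets of~$X$ contained in~$D$ must yield atomic subsets of~$D$ (for the discreteness of~$D$), and atomic subsets of the subspace $X\backslash D$ must give atomic subsets of~$X$ (for the contradiction step). Both identifications are immediate because $\mu$ on the subspace is by definition the restriction of $\mu$ on~$X$ to its measurable subsets, but they are what make the whole argument go through without invoking choice beyond the countable choice implicit in Lemma~\ref{lem:measure-zorn}.
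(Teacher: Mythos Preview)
Your proof is correct and follows essentially the same approach as the paper: apply Lemma~\ref{lem:measure-zorn} to a suitable family closed under countable increasing unions, obtain a maximal element~$D$, and derive a contradiction from an atom in~$X\backslash D$. The only cosmetic difference is your choice of~$\mathcal{S}$: the paper takes~$\mathcal{S}$ to be all discrete measurable subsets of~$X$, whereas you take the (smaller) family of countable unions of atoms, which makes both the closure property and the discreteness of~$D$ slightly more explicit.
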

\begin{proof}
Since clearly the countable union
of discrete measurable subsets of~$X$
is again discrete,
there is by Lemma~\ref{lem:measure-zorn}
a discrete measurable subset~$D$ of~$X$
which is maximal in the sense that~$\mu(D')=\mu(D)$
for every discrete measurable subset~$D'$ of~$X$ with $D\subseteq D'$.
To show that~$X\backslash D$ is continuous,
we must prove that~$X\backslash D$
contains no atomic measurable subsets.
If~$A\subseteq X\backslash D$ is an atomic measurable subset
of~$X$,
then~$D\cup A$
is a discrete measurable
subset of~$X$
which contains~$D$,
and $\mu(D\cup A)=\mu(D)\cup \mu(A) > \mu(D)$.
This contradicts the  maximality of~$D$.
Thus~$X\backslash D$ is continuous.
\end{proof}
\begin{lemma}
\label{lem:continuous-measure-space}
Let~$X$ be a continuous measure space
with~$\mu(X)<\infty$.
Then for every~$r\in [0,\mu(X)]$
there is a measurable subset~$A$ of~$X$ with $\mu(X)=r$.
\end{lemma}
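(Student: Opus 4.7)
The plan is to apply Lemma~\ref{lem:measure-zorn} to the collection
$\mathcal{S} = \{B \in \Sigma_X : \mu(B) \leq r\}$, which is closed under countable increasing unions because $\mu$ is continuous from below and $\mu(X)<\infty$. Starting from $\emptyset \in \mathcal{S}$, the lemma produces a $B\in\mathcal{S}$ that is maximal in the sense that $\mu(B')=\mu(B)$ whenever $B'\in\mathcal{S}$ with $B\subseteq B'$. The goal then reduces to ruling out $\mu(B)<r$, in which case one must produce a slightly larger element of $\mathcal{S}$.

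To do this I would first establish an auxiliary shrinking fact: in a continuous measure space every measurable $E$ with $0<\mu(E)<\infty$ contains a measurable subset of arbitrarily small positive measure. Since $E$ is not atomic, there exists $E'\subseteq E$ with $0<\mu(E')<\mu(E)$; taking whichever of $E'$ and $E\setminus E'$ has the smaller measure gives a subset of measure at most $\mu(E)/2$. Iterating this halving produces subsets of measure at most $\mu(E)/2^n$ for each~$n$.

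Now assume for contradiction that $\mu(B)<r$, and set $\epsilon := r-\mu(B)>0$. Because $\mu(B)<r\leq\mu(X)$, the complement $X\setminus B$ has positive measure, and as a subset of the continuous space $X$ it again contains no atomic subsets. The auxiliary fact then yields $F\subseteq X\setminus B$ with $0<\mu(F)<\epsilon$. Then $B\cup F\in\mathcal{S}$, $B\subseteq B\cup F$, and $\mu(B\cup F)=\mu(B)+\mu(F)>\mu(B)$, contradicting the maximality of~$B$. Hence $\mu(B)=r$, and $A:=B$ is the desired subset.

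The main obstacle is the auxiliary shrinking lemma: the definition of ``continuous'' only asserts the absence of atomic subsets in a purely qualitative form, and one must convert this into the quantitative statement that subsets of arbitrarily small positive measure exist. The halving iteration is clean once stated, but care is needed that the split $E = E' \sqcup (E\setminus E')$ produces two pieces of \emph{strictly} positive measure, which is precisely what the strict inequalities $0<\mu(E')<\mu(E)$ from non-atomicity supply.
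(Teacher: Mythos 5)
Your proposal is correct and follows essentially the same route as the paper: the halving argument turning non-atomicity into subsets of arbitrarily small positive measure, combined with Lemma~\ref{lem:measure-zorn} applied to $\{B\in\Sigma_X : \mu(B)\leq r\}$ and a maximality contradiction. The only difference is the order of the two steps (and your explicit remark that $\mu(X\setminus B)>0$, which the paper leaves implicit), which is immaterial.
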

\begin{proof}
Let us quickly get rid of the case that~$\mu(X)=0$.
Indeed, then~$r=0$, and so~$A=\varnothing$ will do.
For the remainder, assume that~$\mu(X)>0$.

For starters, we show that for every~$\varepsilon >0$
and~$B\in\Sigma_X$ with~$\mu(B)>0$
there is~$A\in\Sigma_X$ with $A\subseteq B$
and  $0<\mu(A)<\varepsilon$.
Define~$A_1 := B$.
Since~$\mu(B)>0$,
and~$A_1$ is not atomic (because~$X$ is continuous)
there is~$A\in\Sigma_X$ with $A\subseteq A_1$ 
and $\mu(A)\neq \mu(A_1)$.
Since~$\mu(A)+\mu(A_1\backslash A)=\mu(A_1)$,
either $0<\mu(A)\leq \frac{1}{2}\mu(A_1)$
or $0<\mu(X\backslash A)\leq \frac{1}{2}\mu(A_1)$.
In any case,
there is~$A_2\subseteq A_1$
with $A_2\in\Sigma_X$
and $0<\mu(A_2)\leq \frac{1}{2}\mu(A_1)$.
Similarly,
since~$A_2$ is not atomic (because~$X$ is continuous),
there is~$A_3\subseteq A_2$
with~$A_3\in\Sigma_X$ and $0<\mu(A_3)\leq \frac{1}{2}\mu(A_2)$.
Proceeding in a similar fashion,
we obtain a sequence $B\equiv A_1 \supseteq A_2\supseteq \dotsb$
of measurable subsets of~$X$
with $0<\mu(A_n)\leq 2^{-n}\mu(X)$.
Then, for every $\varepsilon >0$
there is~$n\in\mathbb{N}$
such that $0<\mu(A_n)\leq \varepsilon$ and~$A_n\subseteq B$.

Now, 
let us prove that there is~$A\in\Sigma_X$ with $\mu(A)=r$.
By Lemma~\ref{lem:measure-zorn}
there is a measurable
subset~$A$ of~$X$
with $\mu(A)\leq r$
and which is maximal
in the sense that $\mu(A')=\mu(A)$
for all~$A'\in\Sigma_X$
with $\mu(A)\leq r$ and~$A\subseteq A'$.
In fact, we claim that~$\mu(A)=r$.
Indeed, suppose that~$\varepsilon := r-\mu(A)>0$
towards a contradiction.
By the previous discussion,
there is~$C\in\Sigma_X$ with $C\subseteq X\backslash A$
such that $\mu(C)\leq \varepsilon$.
Then~$A\cup C$ is measurable,
and $\mu(A\cup C)=\mu(A)+\mu(C)\leq \mu(A)+\varepsilon\leq r$,
which contradicts the maximality of~$A$.
\end{proof}

\begin{lemma}
\label{lem:continuous-finite-measure-space-not-duplicable}
Let~$X$ be a continuous measure space
with~$\mu(X)<\infty$.
If~$L^\infty(X)$ is duplicable,
then~$\mu(X)=0$.
\end{lemma}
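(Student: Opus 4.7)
The plan is to assume $\mu(X)>0$ towards a contradiction by producing a decreasing sequence $F_1\geq F_2\geq\dotsb$ in $L^\infty(X)\otimes L^\infty(X)$ whose infimum in the von Neumann algebra sense equals $0$, but on which any duplicator must take the constant value~$1$, violating normality.

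By Lemma~\ref{lem:unit-duplicator} and Lemma~\ref{lem:uniqueness-duplicator}, every duplicator $\delta$ on $L^\infty(X)$ is a normal unital positive linear map satisfying $\delta(a\otimes b) = a\cdot b$. Iteratively applying Lemma~\ref{lem:continuous-measure-space} (to each block of an existing partition, which is itself a continuous measure space of finite measure), I would construct, for every $k\in\mathbb{N}$, a measurable partition $X = A^{(k)}_1\sqcup\dotsb\sqcup A^{(k)}_{2^k}$ with $\mu(A^{(k)}_i)=2^{-k}\mu(X)$, arranged so that the partition at level $k+1$ refines that at level $k$. Set
\[
F_k \ :=\ \sum_{i=1}^{2^k} \chi_{A^{(k)}_i}\otimes \chi_{A^{(k)}_i}\ \in\ L^\infty(X)\otimes L^\infty(X).
\]
A short expansion using $A^{(k)}_i = A^{(k+1)}_{2i-1}\sqcup A^{(k+1)}_{2i}$ shows that $F_k-F_{k+1}$ is a sum of positive tensors $\chi_A\otimes\chi_B$ with $A\cap B=\varnothing$, so $0\leq F_{k+1}\leq F_k$.

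Under the standard identification $L^\infty(X)\otimes L^\infty(X)\cong L^\infty(X\times X)$ of the spatial tensor product of finite-measure $L^\infty$-algebras, $F_k$ corresponds to the characteristic function of $E_k := \bigsqcup_i A^{(k)}_i \times A^{(k)}_i$, which has product measure $2^{-k}\mu(X)^2\to 0$. Hence $\inf_k F_k=\chi_{\bigcap_k E_k}=0$ in the von Neumann algebra $L^\infty(X\times X)$. Normality of $\delta$ then yields $\delta(F_k)\downarrow 0$; but, using $\delta(a\otimes b)=ab$ and idempotency of characteristic functions, $\delta(F_k) = \sum_i\chi_{A^{(k)}_i}^2 = \sum_i\chi_{A^{(k)}_i} = 1$ for every~$k$, forcing $1=0$ in $L^\infty(X)$, i.e.\ $\mu(X)=0$.

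The delicate point is the identification of the spatial tensor product with $L^\infty(X\times X)$, since this is what justifies that $\inf_k F_k=0$ in the von Neumann algebra (not merely $F_k\to 0$ weakly or in measure). Once this standard fact about $\sigma$-finite measure spaces is invoked, everything else reduces to the one-line normality argument above combined with the refinement structure of the partitions.
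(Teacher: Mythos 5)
Your proof is correct, and its skeleton is the same as the paper's: iterate Lemma~\ref{lem:continuous-measure-space} to build refining partitions of~$X$ into $2^k$ pieces of equal measure, form the decreasing diagonal projections $F_k=\sum_i \chi_{A^{(k)}_i}\otimes\chi_{A^{(k)}_i}$, note that $\delta(F_k)=1$ by Lemma~\ref{lem:uniqueness-duplicator}, and let normality of~$\delta$ collide with $\inf_k F_k=0$. The one genuine divergence is how the vanishing of the infimum is certified. You pass through the identification $L^\infty(X)\otimes L^\infty(X)\cong L^\infty(X\times X)$ and read off $\inf_k F_k=\chi_{\bigcap_k E_k}=0$ from continuity from above of the product measure; this is legitimate here, since $\mu(X)<\infty$ makes the spaces $\sigma$-finite, for which the identification of the spatial tensor product with $L^\infty$ of the product measure space is a standard fact (see e.g.~\cite{Takesaki1}). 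The paper instead avoids any concrete description of the tensor product: it takes the normalised integral $\omega=\frac{1}{\mu(X)}\int(-)\,d\mu$, invokes only the existence of a \emph{faithful} normal state $\omega\otimes\omega$ on $L^\infty(X)\otimes L^\infty(X)$ (Corollary~5.12 of~\cite{Takesaki1}), and deduces $q:=\inf_k F_k=0$ from $(\omega\otimes\omega)(F_k)=2^{-k}\to 0$ together with normality and faithfulness. Your route makes the computation of the infimum completely transparent; the paper's route needs a somewhat lighter input (existence and faithfulness of the product state) and sidesteps questions about product $\sigma$-algebras and completions. Either way the contradiction $1=\bigwedge_k\delta(F_k)=\delta(\inf_k F_k)=\delta(0)=0$ is the same, so your argument stands.
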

\begin{proof}
Suppose that~$L^\infty(X)$ is duplicable
and~$\mu(X)>0$
towards a contradiction.
Let~$\delta$
be a duplicator
on~$L^\infty(X)$.
By Lemma~\ref{lem:uniqueness-duplicator}
we know that~$\delta(f\otimes g)=f\cdot g$ for all~$f,g\in L^\infty(X)$.

Let~$\omega\colon L^\infty(X)\to \mathbb{C}$
be given by~$\omega(f)=\frac{1}{\mu(X)}\int f \,d\mu$
for all~$f\in L^\infty(X)$.
Then~$\omega$ is normal, positive and unital.
Also, $\omega$ is faithful,
or in other words,
for all~$f\in L^\infty(X)$
with $f\geq 0$ and $\omega(f)=0$
we have $f=0$.
It is known
(see e.g.~Corollary 5.12 of~\cite{Takesaki1})
that  there is a  faithful normal positive unital linear map
$\omega\otimes \omega\colon L^\infty(X)\otimes L^\infty(X)\to \mathbb{C}$
with~$(\omega\otimes \omega)(f\otimes g) = \omega(f)\cdot \omega(g)$
for all~$f,g\in L^\infty(X)$.
We will use
$\omega\otimes \omega$ to tease out a contradiction,
but first we will need a second ingredient.

Since~$X$ is continuous,
we may partition~$X$ into two measurable
subsets of equal measure 
with the aid of Lemma~\ref{lem:continuous-measure-space},
that is,
there are measurable subsets $X_{1}$ and~$X_{2}$
of~$X$ with $X=X_{1}\cup X_{2}$, $X_{1}\cap X_{2}=\varnothing$,
and
$\mu(X_{1})=\mu(X_{2})=\frac{1}{2}\mu(X)$.
Similarly, $X_{1}$ 
can be split into two measurable subsets, $X_{11}$ and $X_{12}$,
of equal measure, and so on.
In this way,
we obtain for every word~$w$ over the alphabet~$\{1,2\}$
--- in symbols, $w\in \{1,2\}^*$ ---
a measurable subset~$X_w$ of~$X$
such that $X_w = X_{w1}\cup X_{w2}$,
$X_{w1}\cap X_{w2}=\varnothing$,
and $\mu(X_{w1})=\mu(X_{w2})=\frac{1}{2}\mu(X_w)$.
It follows that~$\mu(X_w)=\frac{1}{2^{\#w}}\mu(X)$,
where~$\#w$ is the length of the word~$w$.

Now, let~$p_w = \mathbf{1}_{X_w}$ 
be the indicator function of~$X_w$
for every~$w\in \{1,2\}^*$.
Let~$w\in \{1,2\}^*$ be given.
Then~$p_w$ is a projection in~$L^\infty(X)$,
and~$\omega(p_w)=2^{-\#w}$.
Moreover, $p_w = p_{w1}+p_{w2}$,
and so
\begin{alignat*}{3}
p_w\otimes p_w 
\ &=\  
p_{w1}\otimes p_{w1} \,+\,
p_{w1}\otimes p_{w2} \,+\,
p_{w2}\otimes p_{w1} \,+\,
p_{w2}\otimes p_{w2}\\
\ &\geq\ 
p_{w1}\otimes p_{w1} \,+\,
p_{w2}\otimes p_{w2}.
\end{alignat*}
Thus, if we define 
$q_N\ :=\ \sum_{w\in \{1,2\}^N}\,p_w\otimes p_w$
for every natural number~$N$,
where~$\{1,2\}^N$ is the set of words over~$\{1,2\}$ of length~$N$,
then we get a descending sequence $q_1\geq q_2\geq q_3\geq \dotsb$
of projections in~$L^\infty(X)\otimes L^\infty(X)$.
Let~$q$ be the infimum of $q_1\geq q_2 \geq \dotsb$ 
in the set of self-adjoint elements of~$L^\infty(X)\otimes
L^\infty(X)$.
Do we have~$q=0$ ?

On the one hand,
we claim that $\delta(q)=1$, and so~$q\neq 0$.
Indeed,
$\delta(p_w\otimes p_w)=p_w\cdot p_w = p_w$
for all~$w\in \{1,2\}^N$.
Thus $\delta(q_N) = \sum_{w\in \{1,2\}^N}  \delta(p_w\otimes p_w)
= \sum_{w\in\{1,2\}^N} p_w=1$ for all~$N\in \mathbb{N}$.
Hence $\delta(q)=\bigwedge_n \delta(q_N) = 1$,
because~$\delta$ is normal.
On the other hand,
we claim that $(\omega\otimes \omega)(q)=0$,
and so~$q=0$ since~$\omega\otimes \omega$ is 
faithful and $q\geq 0$.
Indeed,
$(\omega\otimes\omega)(q_N)=
\sum_{w\in\{1,2\}^N} \omega(p_w)\cdot\omega(p_w)
= \sum_{w\in\{1,2\}^N} 2^{-N}\cdot 2^{-N} = 2^{-N}$
for all~$N\in \mathbb{N}$,
	and so $(\omega\otimes\omega)(q)
=\bigwedge_N (\omega\otimes\omega)(q_N) = \bigwedge_N 2^{-N}=0$.
Thus, $q=0$ and $q\neq 0$, which is impossible.
\end{proof}

\begin{lemma}
\label{lem:atomic-measure-space}
Let~$A$ be an atomic measure space.
Then~$L^\infty(A)\cong \mathbb{C}$.
\end{lemma}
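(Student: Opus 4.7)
The plan is to prove that every element of $L^\infty(A)$ is essentially constant, so that the canonical MIU-map $\mathbb{C} \to L^\infty(A)$, $\lambda \mapsto \lambda \cdot 1$, is an isomorphism. The first observation is that atomicity of~$A$ (Definition~\ref{def:measure-space}(2)) says every measurable $B \subseteq A$ satisfies $\mu(B) \in \{0, \mu(A)\}$; in the second case $\mu(A \backslash B) = 0$ because $\mu(A) < \infty$. So every measurable subset of~$A$ is null or conull.

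Next, for a bounded measurable $f \colon A \to \mathbb{R}$, each level set $\{f > t\}$ is null or conull. I would set
\[
c \;:=\; \inf\{\,t \in \mathbb{R} \mid \mu(\{f > t\}) = 0\,\},
\]
a real number because~$f$ is bounded. For rationals $t_n \searrow c$, every $\{f > t_n\}$ is null, so $f \leq c$ a.e. For rationals $s_n \nearrow c$, every $\{f > s_n\}$ is conull, so $f > s_n$ a.e.\ for each~$n$, giving $f \geq c$ a.e. Hence $f = c$ almost everywhere. Applying this to the real and imaginary parts extends the conclusion to complex-valued $f \in L^\infty(A)$.

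It then follows that every class in~$L^\infty(A)$ has a constant representative, so the unital $*$-homomorphism $\lambda \mapsto \lambda \cdot 1$ from~$\mathbb{C}$ to~$L^\infty(A)$ is surjective; it is injective because $\mu(A) > 0$, hence a MIU-isomorphism (normality is automatic since~$\mathbb{C}$ is finite dimensional). I do not expect any genuine obstacle: the one point needing care is to choose a countable test set of rationals in the infimum so the argument stays within countable additivity of~$\mu$.
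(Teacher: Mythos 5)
Your proof is correct and takes essentially the same approach as the paper's: show that every element of $L^\infty(A)$ is essentially constant, using that atomicity forces every measurable subset of~$A$ to be null or conull. The only difference is cosmetic --- the paper pins down the constant by a nested bisection of the range of (the real part of) $f$ into intervals of full measure, while you take $c$ to be the essential supremum and test it against countably many rational levels; both rest on the same countable-additivity argument.
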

\begin{proof}
Let~$f\in L^\infty(A)$ be given.
It suffices to show that
there is~$z\in \mathbb{C}$
such that
$f(x)=z$ for almost all~$x\in A$.
Moreover, we only need to consider the case
that~$f$ takes its values in~$\mathbb{R}$
(because we may split~$f$ in its real and imaginary parts,
and in turn split these in positive and negative parts).

Since,
writing  $I_n = (n,n+1]$,
we have $\mu(A) = \sum_{n\in\mathbb{Z}} f^{-1}(I_n)$,
and~$A$ is atomic,
there a (unique)~$n\in \mathbb{N}$
with  $\mu(A)=\mu(f^{-1}(I_n))$.
Similarly,
writing 
$J_{1} = (n,\frac{2n+1}{2}]$
and $J_{2}=(\frac{2n+1}{2},n+1]$,
we have
$\mu(A) = \mu(J_{1}) + \mu(J_{2})$,
and so there is a (unique)  $m\in \{1,2\}$
with $\mu(A)=\mu(f^{-1}(J_{m}))$.

Continuing in this way,
we can find real numbers $s_1 \leq s_2 \leq \dotsb \leq t_2 \leq t_1$
with $t_n-s_n \leq 2^{-n}$
and $\mu(A)=\mu( f^{-1}(\ (s_n,t_n]\ ))$
for all~$n\in \mathbb{N}$.
Of course,
we also have~$\mu(A)=\mu(f^{-1}(\ [s_n,t_n]\ ))$,
and so
\begin{equation*}
\textstyle
\mu(A) \ =\  \bigwedge_n \mu(f^{-1}(\ [s_n,t_n]\ ))
\ =\  \mu(f^{-1}(\ \bigcap_n [s_n,t_n]\ )).
\end{equation*}
Since~$t_n-s_n \to 0$ as~$n\to \infty$,
there is a real number~$\lambda\in \mathbb{R}$
with $\{\lambda \} = \bigcap_n[s_n,t_n]$,
and so~$\mu(A)=f^{-1}(\{\lambda\})$.
Hence~$f(x)=\lambda$ for almost all~$x\in A$.
\end{proof}
\begin{lemma}
\label{lem:measure-space-partition}
Let~$X$ be a measure space with~$\mu(\mathscr{A})<\infty$.
Then for every partition~$\mathcal{A}$
of~$X$
consisting of measurable subsets,
we have~$L^\infty(X)\cong \bigoplus_{A\in\mathcal{A}} L^\infty(A)$.
\end{lemma}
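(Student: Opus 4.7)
The plan is to construct the isomorphism as the restriction map
\[
\Phi \colon L^\infty(X) \longrightarrow \bigoplus_{A\in\mathcal{A}} L^\infty(A),
\qquad \Phi(f) \,=\, (f|_A)_{A\in\mathcal{A}},
\]
and check that it is a normal MIU-map that is both injective and surjective. The routine part is checking the MIU-structure: restriction preserves pointwise multiplication, involution, and takes the constant function~$1$ to the family with~$1$ in every coordinate; boundedness is immediate from $\|f|_A\|\leq \|f\|$; normality follows because an increasing bounded net in $L^\infty(X)$ restricts to an increasing bounded net on each $A$ with the same pointwise supremum.

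Before doing so I would make the preliminary observation that although $\mathcal{A}$ could a priori be uncountable, only countably many $A\in\mathcal{A}$ have $\mu(A)>0$, because any finite subcollection of positive-measure elements has total measure at most $\mu(X)<\infty$. For $A$ with $\mu(A)=0$ we have $L^\infty(A)=\{0\}$, so these factors are trivial in the direct sum and can be discarded; write $\mathcal{A}_+$ for the countable subcollection of positive-measure pieces. This reduction to a countable partition is what makes the remaining measure-theoretic gluing arguments go through.

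For injectivity, suppose $\Phi(f)=0$. Then for each $A\in\mathcal{A}_+$ the set $\{x\in A \mid f(x)\neq 0\}$ is null. Since $\mathcal{A}_+$ is countable and the remaining pieces of $\mathcal{A}$ already have measure zero, the set $\{x\in X\mid f(x)\neq 0\}$ is a countable union of null measurable sets (together with a null remainder), hence null, so $f=0$ in $L^\infty(X)$. For surjectivity, given a bounded family $(g_A)_{A\in\mathcal{A}_+}$ with $c:=\sup_A \|g_A\|<\infty$, define $f\colon X\to\mathbb{C}$ by $f(x)=g_A(x)$ whenever $x\in A\in\mathcal{A}_+$, and~$f(x)=0$ otherwise. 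Because $\mathcal{A}$ is a partition, $f$ is unambiguously defined and satisfies $|f|\leq c$; measurability of $f$ follows from $f^{-1}(B) = \bigcup_{A\in\mathcal{A}_+} g_A^{-1}(B)$ being a countable union of measurable sets. Clearly $\Phi(f)=(g_A)_A$, completing the proof.

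The step I expect to be most delicate is the injectivity/surjectivity argument, precisely because each hinges on replacing an a priori uncountable union by a countable one. Once the finite-measure reduction to~$\mathcal{A}_+$ is in place, everything else is bookkeeping; without it, an uncountable partition into null sets (say, the partition of $[0,1]$ by singletons) would break both measurability of the glued function and the conclusion that $f=0$ a.e.\ from vanishing on each piece.
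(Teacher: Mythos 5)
Your construction is essentially the paper's: the coordinatewise restriction map, the observation that only countably many pieces can have positive measure, and a gluing argument for surjectivity. The surjectivity half is fine (your variant of defining the glued function to be $0$ off $\bigcup\mathcal{A}_+$ is even slightly cleaner than the paper's, since it avoids any appeal to completeness of the measure space; just note that $f^{-1}(B)$ also contains $X\setminus\bigcup\mathcal{A}_+$ when $0\in B$, which is harmless because that set is the complement of a countable union of measurable sets). The genuine gap is in injectivity, at the words ``together with a null remainder''. What you need there is that $A_0:=\bigcup\{A\in\mathcal{A}\mid\mu(A)=0\}$ is negligible, and this does not follow from ``the remaining pieces already have measure zero'': an uncountable union of null sets need not be null, and the reduction to the countable subfamily $\mathcal{A}_+$ does nothing to exclude this situation. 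Your own closing example makes the point against you: for $[0,1]$ partitioned into singletons, $\mathcal{A}_+$ is empty, the direct sum on the right is $\{0\}$, and restriction is not injective --- so after the reduction to $\mathcal{A}_+$ the remaining work is not mere bookkeeping; the conclusion genuinely hinges on knowing that the positive-measure pieces carry full measure.

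The paper's proof supplies exactly this missing step before touching injectivity: it shows that $X\setminus A_0=\bigcup\mathcal{A}'$ is measurable and that $\mu(X\setminus A_0)=\sum_{A\in\mathcal{A}'}\mu(A)=\mu(X)$, hence $\mu(A_0)=0$, and only then runs the countable-union argument for injectivity (negligibility of $A_0$ is used again for measurability and essential boundedness of the glued function). Admittedly this rests on the partition exhausting the measure, i.e.\ $\sum_{A\in\mathcal{A}}\mu(A)=\mu(X)$, which holds automatically for the countable partitions to which the lemma is actually applied later (the split $X=D\cup(X\setminus D)$ and the atomic decomposition of a discrete space) but not for arbitrary partitions, as the singleton example shows. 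So to repair your write-up you should either add the explicit argument that $\mu(A_0)=0$ (as the paper does, under that exhaustion property) or restrict the statement to countable partitions; as written, the injectivity paragraph asserts precisely the fact that can fail.
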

\begin{proof}
Note that since~$\sum_{A\in\mathcal{A}} \mu(A)=\mu(X)$,
the set $\mathcal{A}' = \{A\in \mathcal{A}\mid \mu(A)>0\}$
is countable.
We will also
need the fact that~$A_0 = \bigcup\{ A\in\mathcal{A}\mid \mu(A)=0\}$
is negligible.
To see this,
note that
$X\backslash A_0$ is 
the union of~$\mathcal{A}'$
and thus measurable.
Further, we have $\mu(X\backslash A_0) = 
\sum_{A\in \mathcal{A}'} \mu(A)
= \sum_{A\in\mathcal{A}} \mu(A)=\mu(X)$,
and thus~$\mu(A_0)=0$.

Let~$A\in \mathcal{A}$
and~$f \in L^\infty(X)$ be given.
Then the restriction $f|A\colon A\to \mathbb{C}$
is an element of~$L^\infty(A)$.
It is not hard to see that $f\mapsto f|_A$
gives a MIU-map
$R_A\colon L^\infty(X)\to L^\infty(A)$.
Let~$R\colon L^\infty(X)\to \bigoplus_{A\in \mathcal{A}} L^\infty(A)$
be given by $R(f) = (R_A(f))_{A\in \mathcal{A}}$.
We claim that~$R$ is bijective (and thus a normal MIU-isomorphism).

To show that~$R$ is injective,
let $f\in L^\infty(X)$ with~$R(f)=0$ be given.
Then for every~$A\in \mathcal{A}$
there is a negligible subset~$N_A$ of~$A$ with $f(x)=0$
for all~$x\in A\backslash N_A$.
Thus~$f(x)=0$ for all~$x\in X\backslash\bigcup_{A\in \mathcal{A}} N_A$.
Since~$N := \bigcup_{A\in\mathcal{A}} N_A 
\subseteq A_0 \cup \bigcup_{A\in\mathcal{A}'} N_A$,
we see that~$N$ is negligible,
and so~$f(x)=0$ for almost all~$x\in X$.
Thus $f=0$ in~$L^\infty(X)$,
and thus~$R$ is injective.

To show that~$R$ is surjective,
let $f\in \bigoplus_{A\in \mathcal{A}} L^\infty(A)$
be given,
and define~$g\colon X\to \mathbb{C}$
by $g(x)=f_A(x)$
for all~$A\in\mathcal{A}$ and~$x\in A$.
We claim that~$g\in L^\infty(X)$,
and then clearly $R(g)=f$.
To begin, we show that $g$ is measurable.
Let~$U$ be a measurable subset of~$\mathbb{C}$.
We must show that~$g^{-1}(U)$ is measurable.
We have
\begin{equation*}
\textstyle 
g^{-1}(U)\ = \ \bigcup_{A\in \mathcal{A}} f^{-1}_A(U)
\ = \ \bigcup_{A\in \mathcal{A}'} \, f^{-1}_A(U)
\ \,\cup\ \,\bigcup_{A\in \mathcal{A}\backslash \mathcal{A}'}\, f^{-1}_A(U).
\end{equation*}
Note that~$\bigcup_{A\in\mathcal{A}'}f_A^{-1}(A)$
is measurable
(being a countable union of measurable sets),
and that $\bigcup_{A\in\mathcal{A}\backslash\mathcal{A}'} f_A^{-1}(A)$
is negligible
(being a subset of the negligible set~$A_0$).
Thus~$g^{-1}(U)$ is measurable.
Hence~$g$ is measurable.
It remains to be shown
that $g$ is essentially bounded,
that is, that there is $r>0$ such that $|f(x)|\leq r$
for almost all~$r\in X$.
For every~$A\in\mathcal{A}$
there is a negligible
subset~$N_A$ of~$A$
such that $|f_A(x)|\leq \|f_A\|$
for all~$x\in A\backslash N_A$.
Since~$\|f\|=\sup_{A\in\mathcal{A}}\|f_A\|$,
we have $|g|\leq \|f\|$
for all~$x\in X\backslash (\bigcup_{A\in\mathcal{A}}N_A)$.
Since~$N:=\bigcup_{A\in\mathcal{A}}N_A\,\subseteq\,
A_0\cup\bigcup_{A\in\mathcal{A}'}A$,
we see that~$N$ is negligible,
and thus that~$g$ is essentially bounded.
Of course, $R(g)=f$, and thus~$R$ is surjective.
\end{proof}

\begin{corollary}
\label{cor:discrete-ell-x}
For every discrete 
measure space~$X$ with~$\mu(X)<\infty$
there is a  set~$Y$ with $L^\infty(X)\cong \linf(Y)$.
\end{corollary}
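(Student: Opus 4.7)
The strategy is to partition $X$, up to a null set, into a countable family of atomic measurable subsets, and then assemble $L^\infty(X)$ from the pieces using Lemma~\ref{lem:measure-space-partition}, noting that each atomic piece contributes a copy of $\C$ by Lemma~\ref{lem:atomic-measure-space}.

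To produce the partition, I would apply Lemma~\ref{lem:measure-zorn} to the collection
\[
\mathcal{S}\ =\ \bigl\{B\in \Sigma_X\,:\, B=\textstyle\bigsqcup_{i\in I}A_i\text{ for some countable }I\text{ and pairwise disjoint atomic }A_i\subseteq X\bigr\},
\]
starting from $\varnothing\in \mathcal{S}$. Closure of $\mathcal{S}$ under countable ascending unions can be verified by telescoping: given $B_1\subseteq B_2\subseteq\dotsb$ in $\mathcal{S}$ with $B_0:=\varnothing$, each difference $B_{n+1}\setminus B_n$ is itself a countable disjoint union of atoms, since for any atom $A$ in the decomposition of $B_{n+1}$ the set $A\setminus B_n$ is a measurable subset of $A$ of measure either $0$ or $\mu(A)$, the latter being atomic by the defining property of atoms in Definition~\ref{def:measure-space}. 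Lemma~\ref{lem:measure-zorn} then yields a maximal $B = \bigsqcup_{i\in I}A_i\in \mathcal{S}$ with $I$ countable.

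The main obstacle is showing that $N:=X\setminus B$ is negligible. Supposing $\mu(N)>0$ for contradiction, the discreteness of $X$ should provide an atom $A_\alpha$ of the covering family with $\mu(A_\alpha\cap N)>0$; then by atomicity $A':=A_\alpha\cap N$ is itself atomic and disjoint from $B$, so $B\sqcup A'\in \mathcal{S}$ has strictly greater measure than $B$, contradicting the maximality of $B$. The subtle point is justifying that such an $A_\alpha$ exists: each atom of the cover meets $N$ in either a null set or a set of measure $\mu(A_\alpha)$, and the case where every atom meets $N$ only in a null set must be excluded (the na\"\i ve union bound fails because the cover may be uncountable). Here I would exploit the finiteness $\mu(X)<\infty$: essentially-disjoint families of atoms of $X$ are necessarily countable (each atom has strictly positive measure and the total is at most $\mu(X)$), so the discrete cover may effectively be replaced by a countable one, after which a standard null-set argument eliminates the pathological alternative.

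Finally, Lemma~\ref{lem:measure-space-partition} applied to the partition $\{A_i\}_{i\in I}\cup\{N\}$ of $X$ yields $L^\infty(X)\cong \bigoplus_{i\in I}L^\infty(A_i)\oplus L^\infty(N)$. Since $L^\infty(A_i)\cong \C$ by Lemma~\ref{lem:atomic-measure-space}, and $L^\infty(N)=\{0\}$ because $N$ is null, this simplifies to $L^\infty(X)\cong \bigoplus_{i\in I}\C\cong \linf(I)$. Taking $Y:=I$ completes the proof.
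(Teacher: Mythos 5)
Your overall route --- extract, via Lemma~\ref{lem:measure-zorn}, a maximal countable disjoint family of atoms $B=\bigsqcup_i A_i$ and then combine Lemmas~\ref{lem:measure-space-partition} and~\ref{lem:atomic-measure-space} --- is precisely the argument the paper leaves implicit (Corollary~\ref{cor:discrete-ell-x} is stated without proof), and the bookkeeping you worry about first is indeed minor: that $\bigcup_n B_n$ is a disjoint union of atoms only up to the null pieces $A\setminus B_n$ is harmless, since a null set can be absorbed into any atom (or one can allow a null remainder in the definition of $\mathcal{S}$). The genuine gap sits exactly at the step you flag, and your proposed repair does not work. Maximality of $B$ does give that $N=X\setminus B$ contains no atomic subsets; what you still need is $\mu(N)=0$, and you try to obtain it from discreteness alone by ``effectively replacing the cover by a countable one''. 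Finiteness of $\mu$ does give countably many atoms up to essential equality, but their union need not exhaust $X$ even modulo a null set, because Definition~\ref{def:measure-space} only demands that every \emph{point} of $X$ lie in some atom; the ``pathological alternative'' you want to exclude can actually occur.

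Concretely, let $X=[0,1]\sqcup\{b\}$ be the direct sum of $[0,1]$ with Lebesgue measure and a one-point space of mass $1$ (so the measurable sets are $E\cup F$ with $E\subseteq[0,1]$ Lebesgue measurable and $F\subseteq\{b\}$). For every $x\in[0,1]$ the set $\{x,b\}$ is atomic, since its measurable subsets have measure $0$ or $1$; hence every point of $X$ lies in an atomic subset and $X$ is discrete in the sense of Definition~\ref{def:measure-space}. Yet every atom of $X$ contains $b$ and meets $[0,1]$ in a Lebesgue-null set, so $N=(0,1]$ has measure $1$ while meeting every atom negligibly, and $L^\infty(X)\cong L^\infty([0,1])\oplus\C$ is not MIU-isomorphic to any $\linf(Y)$. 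So, read literally, the corollary itself is false and no argument can close your gap; both the statement and your proof become correct once ``discrete'' is taken in the usual stronger sense that every measurable subset of positive measure contains an atomic subset (equivalently here, that $X$ is a countable disjoint union of atoms up to a null set): then $\mu(N)>0$ would immediately produce an atom inside $N$, contradicting maximality of $B$, and the rest of your argument goes through. (Theorem~\ref{thm:duplicable} is not endangered: there the corollary is applied to a discrete part $D$ with $L^\infty(D)$ duplicable, and a positive-measure $N\subseteq D$ without atomic subsets is a continuous measure space, which is excluded by Corollary~\ref{cor:duplicable-product} together with Lemma~\ref{lem:continuous-finite-measure-space-not-duplicable}.)
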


We are now ready to give the proof
the main result of this paper.
\begin{proof}[Proof of Theorem~\ref{thm:duplicable}]
We have already seen that $\linf(X)$
can be  equipped with a commutative monoid
structure in~$\vNAMIU$
for any set~$X$,
and is thus duplicable.
Conversely,
let~$\delta\colon \mathscr{A}\otimes\mathscr{A}\to\mathscr{A}$
be a duplicator with unit~$u$ on a von Neumann algebra~$\mathscr{A}$.
By Lemma~\ref{lem:unit-duplicator}, we know that~$u=1$,
and by Lemma~\ref{lem:uniqueness-duplicator},
we know that~$\mathscr{A}$
is Abelian 
and~$\delta(a\otimes b)=a\cdot b$
for all~$a,b\in \mathscr{A}$.
Thus, the only thing that remains to be shown
is that~$\mathscr{A}$ is MIU-isomorphic to $\linf(Y)$
for some set~$Y$.

It is known that 
any Abelian von Neumann algebra must be of the form $L^\infty(X)$,
where~$X$ is a  measure space.
Moreover, $X$ can be taken to be a \emph{localisable} measure space,
but we will not need the general theory of localisable measure spaces here.
Instead,
we can get away with using the fact
(obtained 
by inspecting the proof of Proposition~1.18.1
of~\cite{Sakai1998}),
that there is a family of measure spaces $(X_i)_{i\in I}$
with $\mathscr{A}\cong \bigoplus_{i\in I} L^\infty(X_i)$
and $\mu(X_i)<\infty$ for all~$i\in I$.
To prove that~$\mathscr{A}\cong \ell^\infty(Y)$
for some set~$Y$ it suffices
to show that there is for every~$i\in I$
 a set~$Y_i$
with $L^\infty(X_i)\cong \ell^\infty(Y_i)$,
because then 
\begin{equation*}
\textstyle \mathscr{A}\ \cong \ 
\bigoplus_{i\in I} \ell^\infty(Y_i)\ \cong\ 
\ell^\infty\bigl(\,\bigcup_{i\in I} Y_i\,\bigr).
\end{equation*}

Let~$i\in I$ be given.
We must find a set~$Y$ 
such that~$L^\infty(X_i)\cong \ell^\infty(Y)$.
Since~$\mathscr{A}\cong L^\infty(X_i)\,\oplus\,\bigoplus_{j\neq i} 
L^\infty(X_j)$ is duplicable,
$L^\infty(X_i)$ is duplicable
by  Corollary~\ref{cor:duplicable-product}.
By Lemma~\ref{lem:measure-space-continuous-discrete},
there is a measurable subset~$D$ of~$X_i$ such that~$D$
is discrete, and $C:=X\backslash D$ is continuous.
We have~$L^\infty(X_i)\cong L^\infty(D)\oplus L^\infty(C)$
by Lemma~\ref{lem:measure-space-partition},
and
so $L^\infty(D)$ and~$L^\infty(C)$
are duplicable
(again by Corollary~\ref{cor:duplicable-product}).

By Lemma~\ref{lem:continuous-finite-measure-space-not-duplicable},
$L^\infty(C)$
can only be duplicable if~$\mu(C)=0$,
and so~$L^\infty(C)\cong \{0\}$.
On the other hand,
since~$D$ is discrete,
we have~$L^\infty(D)\cong \ell^\infty(Y)$
for some set~$Y$
(by Corollary~\ref{cor:discrete-ell-x}).
Thus we have
\begin{equation*}
L^\infty(X_i)
\ \cong\  L^\infty(D)\,\oplus\, L^\infty(C)
\ \cong\  \ell^\infty(Y)\,\oplus\, \{0\}
\ \cong\  \ell^\infty(Y).
\end{equation*}
Hence~$\mathscr{A}\cong \ell^\infty(Z)$
for some set~$Z$.
\end{proof}

\section{Monoids in $\vNAMIU$ and~$\vNACPsU$}
\label{sec:monoids-in-vna}

First we characterise duplicable von Neumann algebras
in terms of monoids.

\begin{proposition}
\label{prop:dup-vna-is-monoid}
Let $\scrA$ be a von Neumann algebra.
The following are equivalent.
\begin{enumerate}
\item
$\scrA$ is duplicable.
\item
$\scrA$ carries a monoid structure in $(\vNAMIU,\otimes,\C)$.
\item
$\scrA$ carries a monoid structure in $(\vNACPsU,\otimes,\C)$.
\end{enumerate}
In that case, $\scrA$ is a commutative monoid
(both in $\vNAMIU$ and $\vNACPsU$),
and the monoid structure $(m\colon\scrA\otimes\scrA\to\scrA,u\colon\C\to\scrA)$
is a duplicator on $\scrA$
(when $u$ is identified with $u(1)\in\scrA$).
By Theorem~\ref{thm:duplicable}, a monoid structure on $\scrA$ is unique.
\end{proposition}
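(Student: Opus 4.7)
The plan is to establish the three-way equivalence by a circular chain (1)$\Rightarrow$(2)$\Rightarrow$(3)$\Rightarrow$(1), leaning heavily on Theorem~\ref{thm:duplicable} to avoid redoing any analytic work.

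For (1)$\Rightarrow$(2), I will first invoke Theorem~\ref{thm:duplicable} to obtain an MIU-isomorphism $\scrA \cong \linf(X)$ for some set $X$, and then transport along this isomorphism the commutative monoid structure on $\linf(X)$ supplied by Corollary~\ref{cor:linfX-cmon}. For (2)$\Rightarrow$(3), I will appeal to the observation, noted just above Lemma~\ref{lem:adjoint-to-incl}, that the inclusion $\incfun\colon\vNAMIU\hookrightarrow\vNACPsU$ is strict symmetric monoidal, so it sends monoid objects to monoid objects automatically.

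The one substantive verification is (3)$\Rightarrow$(1). Given a monoid $(m\colon\scrA\otimes\scrA\to\scrA,\;u\colon\C\to\scrA)$ in $\vNACPsU$, I will set $\tilde u := u(1)\in\scrA$. Since $u$ is (completely) positive and subunital, $0 \leq \tilde u \leq 1$; and since $m$ is normal CPsU, it is in particular normal positive subunital. The monoid unit law, evaluated on $1\otimes a$ and $a\otimes 1$, unfolds directly into $m(\tilde u\otimes a) = a = m(a\otimes \tilde u)$ for all $a\in\scrA$. Hence $(m,\tilde u)$ is a duplicator on $\scrA$, so $\scrA$ is duplicable.

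Finally, the commutativity and uniqueness clauses fall out of Theorem~\ref{thm:duplicable}: every monoid structure on $\scrA$, in either SMC, yields a duplicator by the argument above, and Theorem~\ref{thm:duplicable} pins that duplicator down to $\delta(a\otimes b)=a\cdot b$ with unit $1$, which is commutative because $\scrA$ is abelian (Lemma~\ref{lem:uniqueness-duplicator}). I do not expect a real obstacle here, since Theorem~\ref{thm:duplicable} has already done the analytic heavy lifting and the remainder is just unpacking monoid axioms; the only minor bookkeeping item is to note that a $\vNACPsU$-morphism $u\colon\C\to\scrA$ is the same datum as the element $u(1)\in[0,1]_\scrA$, and similarly in $\vNAMIU$ it is forced to satisfy $u(1)=1$, consistently with the uniqueness statement.
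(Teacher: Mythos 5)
Your proposal is correct and follows essentially the same route as the paper: (1)$\Rightarrow$(2) by transporting the monoid structure of Corollary~\ref{cor:linfX-cmon} along the isomorphism $\scrA\cong\linf(X)$ from Theorem~\ref{thm:duplicable}, (2)$\Rightarrow$(3) via the strict symmetric monoidal inclusion, and (3)$\Rightarrow$(1) by reading off the duplicator $(m,u(1))$ from the monoid unit law. The only cosmetic difference is that you spell out the commutativity/uniqueness bookkeeping a bit more explicitly than the paper does, which is harmless.
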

\begin{proof}
(1 $\Rightarrow$ 2)
By Theorem~\ref{thm:duplicable},
$\scrA$ is MIU-isomorphic to $\linf(X)$ for some set $X$.
By Corollary~\ref{cor:linfX-cmon},
$\linf(X)$ carries a commutative monoid structure in $\vNAMIU$.
Thus we can equip $\scrA$ with a commutative monoid structure in $\vNAMIU$
via the isomorphism $\scrA\cong\linf(X)$.

(2 $\Rightarrow$ 3) Trivial.

(3 $\Rightarrow$ 1)
Let $m\colon\scrA\otimes\scrA\to\scrA$ and $u\colon\C\to\scrA$ be a monoid structure
on $\scrA$ in $\vNACPsU$.
Then $m(u(1)\otimes a)=
(m\circ (u\otimes \id))(1\otimes a)=
\lambda(1\otimes a)=1\cdot a=a$.
and similarly $m(a\otimes u(1))=a$.
Thus $\scrA$ is duplicable via $m$ and $u(1)$.
\end{proof}

It follows that there is no distinction
between (commutative) monoids in $\vNAMIU$ and in $\vNACPsU$.

\begin{proposition}
\label{prop:cmon-mon-vNA}
$\CMon(\vNAMIU)
=\Mon(\vNAMIU)
=\CMon(\vNACPsU)
=\Mon(\vNACPsU)$.
\end{proposition}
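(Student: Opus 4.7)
The plan is to reduce the statement to Proposition~\ref{prop:dup-vna-is-monoid}, which already does most of the heavy lifting. On objects all four categories coincide: by that proposition a von Neumann algebra $\scrA$ admits a monoid structure in $\vNAMIU$ if and only if it admits one in $\vNACPsU$, if and only if it is duplicable; in each case the structure is unique and automatically commutative, with multiplication $m(a\otimes b)=a\cdot b$ and unit $u(\lambda)=\lambda\cdot 1$. Because this structure is intrinsic to $\scrA$ and does not depend on whether one views the defining maps in $\vNAMIU$ or in $\vNACPsU$, one immediately gets $\Mon = \CMon$ on objects in both ambient categories.

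For morphisms there are two inclusions to establish. One is formal: the strict symmetric monoidal inclusion $\incfun\colon\vNAMIU\hookrightarrow\vNACPsU$ sends monoids to monoids and monoid morphisms to monoid morphisms, giving $\Mon(\vNAMIU)\subseteq\Mon(\vNACPsU)$ and likewise for $\CMon$. For the converse I would take a monoid morphism $f\colon\scrA_1\to\scrA_2$ in $\vNACPsU$ and argue that $f$ is forced to be a MIU-map. The unit axiom $u_2 = f\circ u_1$ evaluated at $1\in\C$ yields $f(1)=1$, upgrading $f$ from subunital to unital; the multiplication axiom $m_2\circ(f\otimes f)=f\circ m_1$ evaluated on a pure tensor $a\otimes b$ collapses to $f(a\cdot b)=f(a)\cdot f(b)$, giving multiplicativity; and complete positivity together with linearity forces $f$ to preserve the involution (via the decomposition of $a\in\scrA_1$ into real and imaginary parts). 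Combined with the normality built into $\vNACPsU$, this shows $f$ is a normal MIU-map, so $f\in\Mon(\vNAMIU)$.

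The argument is mostly bookkeeping, and the only step requiring any pause is the promotion from subunital to unital, which is handed to us directly by the monoid-morphism axiom on units. Beyond that no heavy machinery is needed; in particular none of the measure-theoretic work of Section~\ref{sec:characterisation-dup-vna} reappears, because Theorem~\ref{thm:duplicable} has already cut us down to the case $\scrA_i\cong\linf(X_i)$, where multiplication is pointwise and the monoid-morphism axioms cleanly isolate the MIU-maps. Stringing the two inclusions together with the coincidence of objects and the automatic commutativity finishes the chain of equalities.
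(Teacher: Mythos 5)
Your proposal is correct and takes essentially the same route as the paper: both reduce the object-level equalities to Proposition~\ref{prop:dup-vna-is-monoid} and then show that any monoid morphism in $\vNACPsU$ is automatically a normal MIU-map by evaluating the unit axiom at $1$ and the multiplication axiom on pure tensors, using that the monoid structure is the (unique) algebra multiplication. Your extra remark that positivity forces preservation of the involution is a small point the paper leaves implicit, but it does not change the argument.
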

\begin{proof}
By Proposition~\ref{prop:dup-vna-is-monoid},
$\Mon(\vNAMIU)$ and $\Mon(\vNACPsU)$ have the same objects,
and $\CMon(\vNAMIU)=\Mon(\vNAMIU)$ and $\CMon(\vNACPsU)=\Mon(\vNACPsU)$.
Let $f\colon (\scrA_1,m_1,u_1)\to (\scrA_2,m_2,u_2)$
be a morphism in $\Mon(\vNACPsU)$.
Then $f(1)=(f\circ u_1)(1)=u_2(1)=1$
and $f(ab)=(f\circ m_1)(a\otimes b)=(m_2\circ (f\otimes f))(a\otimes b)=f(a)f(b)$,
using the fact that the monoid structure is a unique duplicator.
Thus $f$ is a normal MIU-map, and
we are done since $\Mon(\vNAMIU)\subseteq \Mon(\vNACPsU)$.
\end{proof}

It turns out that $\linf(\nsp(\mathscr{A}))$,
our interpretation of the $\oc$ operator in the quantum lambda calculus,
is exactly the free (commutative) monoid on $\scrA$ in $\vNAMIU$.

\begin{theorem}
\label{thm:free-monoid-in-vNAMIU}
Let~$\mathscr{A}$
be a von Neumann algebra,
and let~$\eta\colon \mathscr{A}\to\linf(\nsp(\mathscr{A}))$
be the normal MIU-map
given by~$\eta(a)(\varphi)= \varphi(a)$.
Then $\linf(\nsp(\mathscr{A}))$
is the free (commutative) monoid
on~$\mathscr{A}$
in~$\vNAMIU$ via~$\eta$.
\end{theorem}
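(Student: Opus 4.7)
The plan is to verify the universal property of the free (commutative) monoid on $\scrA$: given any monoid $(\scrB,m,u)$ in $\vNAMIU$ and any normal MIU-map $f\colon \scrA\to \scrB$, I must produce a unique monoid morphism $\bar{f}\colon \linf(\nsp(\scrA))\to \scrB$ with $\bar{f}\circ\eta=f$. By Proposition~\ref{prop:dup-vna-is-monoid} combined with Theorem~\ref{thm:duplicable}, every monoid in $\vNAMIU$ is MIU-isomorphic to $\linf(Y)$ for some set $Y$, with the monoid structure necessarily the canonical pointwise one from Corollary~\ref{cor:linfX-cmon}. Hence I may reduce to showing: for every normal MIU-map $f\colon \scrA\to\linf(Y)$, there is a unique monoid morphism $\bar{f}\colon \linf(\nsp(\scrA))\to\linf(Y)$ extending $f$ through $\eta$.

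For existence, I transpose $f$ under the adjunction $\nsp\dashv\linf$ of Lemma~\ref{lem:nsp-linf-ssm} to obtain a function $\hat{f}\colon Y\to\nsp(\scrA)$, concretely $\hat{f}(y)(a)=f(a)(y)$, and set $\bar{f}:=\linf(\hat{f})$. The triangle identity of the adjunction (together with the description of $\eta$ as the unit) then yields $\bar{f}\circ\eta=f$ immediately. To verify $\bar{f}$ is a monoid morphism, I recall that every set carries a canonical commutative comonoid structure in $\Set$ (diagonal and terminal map), and every function is automatically a comonoid morphism; equivalently, every function is a monoid morphism in $\Set^{\op}$. Since $\linf$ is strong symmetric monoidal (Lemma~\ref{lem:nsp-linf-ssm}), it sends $\hat{f}$ to a monoid morphism $\bar{f}$ in $\vNAMIU$, and the monoid structures this produces on $\linf(\nsp(\scrA))$ and $\linf(Y)$ agree, by uniqueness (Proposition~\ref{prop:dup-vna-is-monoid}), with the ones under consideration.

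For uniqueness, suppose $g\colon\linf(\nsp(\scrA))\to\linf(Y)$ is any normal MIU-map (not even a priori a monoid morphism) satisfying $g\circ\eta=f$. Because $\linf$ is full and faithful by Corollary~\ref{cor:linf-ff}, there is a unique function $h\colon Y\to\nsp(\scrA)$ with $g=\linf(h)$. Then $\linf(h)\circ\eta=f$ transposes under the adjunction to $h=\hat{f}$, whence $g=\linf(\hat{f})=\bar{f}$. The heavy lifting has been absorbed by Theorem~\ref{thm:duplicable} and the symmetric monoidal adjunction with counit-iso of Lemma~\ref{lem:nsp-linf-ssm}; the only remaining subtlety is purely bookkeeping, namely confirming that the various monoid structures on $\linf(Y)$ (the one given with $\scrB$ versus the one transported from the comonoid structure on $Y$) coincide, and this is immediate from the uniqueness half of Proposition~\ref{prop:dup-vna-is-monoid}.
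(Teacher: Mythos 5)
Your proof is correct and follows essentially the same route as the paper: reduce the target monoid to $\linf(Y)$ via Theorem~\ref{thm:duplicable}, transpose along the adjunction $\nsp\dashv\linf$ to get the mediating map, use full faithfulness of $\linf$ (Corollary~\ref{cor:linf-ff}) for uniqueness, and check the monoid-morphism condition. The only cosmetic difference is that you verify the latter via strong monoidality of $\linf$ and comonoid morphisms in $\Set$, whereas the paper simply notes that the multiplication on both algebras is pointwise and hence preserved by any MIU-map.
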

\begin{proof}
Let~$\mathscr{B}$
be a monoid 
on~$\vNAMIU$,
and let~$f\colon \mathscr{A}\to\mathscr{B}$
be a normal MIU-map
We must show that
there is a unique
monoid morphism
$g\colon \linf(\nsp(\mathscr{A}))
\rightarrow \mathscr{B}$
such that~$g\circ \eta = f$.

By Theorem~\ref{thm:duplicable},
we may assume that~$\mathscr{B}=\linf(Y)$
for some set~$Y$.
Since~$\nsp\colon \vNAMIU^\op\to \Set$
is left adjoint
to~$\linf\colon \Set \to \vNAMIU^\op$
with unit~$\eta$ (Lemma~\ref{lem:nsp-linf-ssm}),
there is a unique map $h\colon Y\to \nsp(\mathscr{A})$
with $\linf(h)\circ \eta = f$.
Since~$\linf$ is full and faithful
by Corollary~\ref{cor:linf-ff},
the only thing that remains to be shown is that~$\linf(h)$
is a monoid morphism.
Indeed it is,
since the monoid multiplication
on~$\linf(\nsp(\mathscr{A}))$
and~$\linf(Y)$
is given by ordinary multiplication,
which is preserved by~$\linf(h)$,
being a MIU-map.
\end{proof}

\begin{corollary}
Let~$\mathscr{A}$
be a von Neumann algebra.
Then $\linf(\vNACPsU(\mathscr{A},\C))$
is the free (commutative) monoid
on~$\mathscr{A}$ in~$\vNACPsU$.
\end{corollary}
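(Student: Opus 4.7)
The plan is to derive this corollary from Theorem~\ref{thm:free-monoid-in-vNAMIU} by composing two adjunctions, using Proposition~\ref{prop:cmon-mon-vNA} to transfer the free-monoid construction from $\vNAMIU$ to $\vNACPsU$. First, I would observe that Proposition~\ref{prop:cmon-mon-vNA} gives $\Mon(\vNAMIU)=\Mon(\vNACPsU)$ as categories, so the forgetful functor $U'\colon\Mon(\vNACPsU)\to\vNACPsU$ factors as $U'=\incfun\circ U$, where $U\colon\Mon(\vNAMIU)\to\vNAMIU$ is the forgetful functor and $\incfun\colon\vNAMIU\hookrightarrow\vNACPsU$ is the inclusion.

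Next, Theorem~\ref{thm:free-monoid-in-vNAMIU} provides a left adjoint $\scrB\mapsto\linf(\nsp(\scrB))$ to $U$, and Lemma~\ref{lem:adjoint-to-incl} provides a left adjoint $\qcomp$ to $\incfun$. Composing these yields a left adjoint to $U'$ that sends $\scrA\in\vNACPsU$ to $\linf(\nsp(\qcomp(\scrA)))$, which is therefore the free (commutative) monoid on $\scrA$ in $\vNACPsU$. To rewrite this in the form claimed, I would use that $\incfun(\C)=\C$ together with the hom-set bijection of $\qcomp\dashv\incfun$ to obtain a natural isomorphism
\[
\nsp(\qcomp(\scrA))\,=\,\vNAMIU(\qcomp(\scrA),\C)\,\cong\,\vNACPsU(\scrA,\C).
\]
Applying $\linf$ gives a normal MIU-isomorphism $\linf(\nsp(\qcomp(\scrA)))\cong\linf(\vNACPsU(\scrA,\C))$, which transports the free-monoid structure to the right-hand side.

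The argument is essentially formal given Theorem~\ref{thm:free-monoid-in-vNAMIU}, Lemma~\ref{lem:adjoint-to-incl}, and Proposition~\ref{prop:cmon-mon-vNA}; the main thing to be careful about is the factorisation $U'=\incfun\circ U$, whose morphism part is precisely the content of Proposition~\ref{prop:cmon-mon-vNA} (monoid morphisms in $\vNACPsU$ are automatically MIU-maps). The only additional pedestrian check is that the unit of the composite adjunction, transported along the displayed isomorphism, has the expected evaluation form $a\mapsto(\varphi\mapsto\varphi(a))$, which follows by tracing through the units of the two component adjunctions.
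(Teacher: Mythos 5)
Your proposal is correct and follows essentially the same route as the paper: both factor the forgetful functor $\Mon(\vNACPsU)\to\vNACPsU$ through $\vNAMIU$ via Proposition~\ref{prop:cmon-mon-vNA}, compose the adjunction of Theorem~\ref{thm:free-monoid-in-vNAMIU} with $\qcomp\dashv\incfun$ from Lemma~\ref{lem:adjoint-to-incl}, and then identify $\vNAMIU(\qcomp\scrA,\C)\cong\vNACPsU(\scrA,\C)$. No substantive differences.
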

\begin{proof}
Theorem~\ref{thm:free-monoid-in-vNAMIU} asserts that
$\linf\circ\nsp$ is a left adjoint to
the forgetful functor $\Mon(\vNAMIU)\to\vNAMIU$.
By Prop.~\ref{prop:cmon-mon-vNA},
the forgetful functor $\Mon(\vNACPsU)\to\vNACPsU$
factors through $\vNAMIU$ as:
\[
\xymatrix{
\Mon(\vNACPsU)
\ar@{=}[r]
&
\Mon(\vNAMIU)
\ar[r]^-{\bot}&
\ar@/_3ex/[l]_{\linf\circ\nsp}
\vNAMIU
\ar@{^{ (}->}[r]^-{\bot}&
\ar@/_3ex/[l]_{\qcomp}
\vNACPsU
}
\]
where $\qcomp$ is the adjoint functor of Lemma~\ref{lem:adjoint-to-incl}.
Thus the free monoid on $\scrA$ in $\vNACPsU$ is given by:
\[
(\linf\circ\nsp\circ\qcomp)(\scrA)
=
\linf(\vNAMIU(\qcomp\scrA,\C))
\cong
\linf(\vNACPsU(\scrA,\C))
\enspace.\qedhere
\]
\end{proof}

Finally we observe that duplicable von Neumann algebras and
monoids in $\vNAMIU$ (or in $\vNACPsU$) are simply identified with sets.
Let $\DvNAMIU\subseteq\vNAMIU$ denote the full subcategory
consisting of duplicable von Neumann algebras.

\begin{proposition}
$\Mon(\vNAMIU)\cong\DvNAMIU\simeq\Set^{\op}$.
\end{proposition}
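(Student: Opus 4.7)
The plan is to decompose the proposition into its two constituent claims and assemble each from results already proved. For the isomorphism $\Mon(\vNAMIU) \cong \DvNAMIU$, I would build the forgetful functor $U\colon \Mon(\vNAMIU) \to \DvNAMIU$ and verify it is bijective on objects and on hom-sets. Bijectivity on objects is immediate from Proposition~\ref{prop:dup-vna-is-monoid}, which identifies the objects of $\Mon(\vNAMIU)$ with the duplicable von Neumann algebras, together with the uniqueness of the monoid structure asserted there (and in Theorem~\ref{thm:duplicable}). For morphisms, the proof of Proposition~\ref{prop:cmon-mon-vNA} already establishes that any normal MIU-map between duplicable algebras automatically preserves the unique multiplication and unit, so every morphism of $\DvNAMIU$ lifts uniquely to a monoid morphism. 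This gives a two-sided inverse to $U$ and hence the desired isomorphism of categories.

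For the equivalence $\DvNAMIU \simeq \Set^{\op}$, my strategy is to restrict the functor $\linf\colon \Set^{\op}\to \vNAMIU$ of Lemma~\ref{lem:nsp-linf-ssm} to a functor into $\DvNAMIU$, which is legitimate because $\linf(X)$ is duplicable by Corollary~\ref{cor:linfX-cmon}. Theorem~\ref{thm:duplicable} says every duplicable von Neumann algebra is MIU-isomorphic to some $\linf(X)$, so this restricted functor is essentially surjective. It is also full and faithful, since Corollary~\ref{cor:linf-ff} provides fullness and faithfulness into $\vNAMIU$, and both properties are automatically inherited when one shrinks the codomain to a full subcategory. An essentially surjective, fully faithful functor is an equivalence, which completes the second part.

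There is essentially no obstacle to overcome here, as every step reduces to a result established earlier; the only thing to watch is that the two halves fit together compatibly, yielding the pleasing conclusion that duplicable von Neumann algebras, monoids in $\vNAMIU$, and monoids in $\vNACPsU$ (by Proposition~\ref{prop:cmon-mon-vNA}) are all nothing more exotic than sets.
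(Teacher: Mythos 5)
Your proposal is correct and takes essentially the same route as the paper: the forgetful functor $\Mon(\vNAMIU)\to\DvNAMIU$ is bijective on objects and full because the unique monoid structure is just the multiplication and unit (Theorem~\ref{thm:duplicable}), and the restricted functor $\linf\colon\Set^{\op}\to\DvNAMIU$ is fully faithful by Corollary~\ref{cor:linf-ff} and essentially surjective by Theorem~\ref{thm:duplicable}. The only nitpick is that the fact you need for fullness (normal MIU-maps automatically preserve the monoid structure) is not literally what the proof of Proposition~\ref{prop:cmon-mon-vNA} establishes---that argument goes the other way, showing monoid morphisms are MIU-maps---but it follows immediately from Theorem~\ref{thm:duplicable}, exactly as the paper argues.
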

\begin{proof}
($\Mon(\vNAMIU)\cong\DvNAMIU$)
By Proposition~\ref{prop:dup-vna-is-monoid},
we have the forgetful functor $U\colon\Mon(\vNAMIU)\to\DvNAMIU$
that is bijective on objects.
We need to prove that the functor is full,
namely that any normal MIU-map between monoids
is a monoid morphism.
The monoid structure is preserved by normal MIU-maps
since the monoid structure is a duplicator
by Proposition~\ref{prop:dup-vna-is-monoid},
and the duplicator is given by the multiplication and unit
of the von Neumann algebra by Theorem~\ref{thm:duplicable}.

($\DvNAMIU\simeq\Set^{\op}$)
We have a functor $\linf\colon\Set^{\op}\to\DvNAMIU$,
which is full and faithful by Corollary~\ref{cor:linf-ff},
and also essentially surjective
by Theorem~\ref{thm:duplicable}.
\end{proof}

\begin{remark}
Note that $\linf(\nsp(\mathscr{A}))$
is a free commutative comonoid on $\scrA$ in $\vNAMIU^{\op}$.
Thus $\vNAMIU^{\op}$ gives a model of intuitionistic linear logic
formulated by Lafont (called a \emph{Lafont category}),
see e.g.~\cite{Mellies2009}.
We leave it for future work to check if the explicit construction
of the free commutative comonoid by Melli\`es et al.~\cite{MelliesTT2009} works
for $\vNAMIU^{\op}$.
\end{remark}

\section{Variations and Related Work}
\label{sec:variation-and-related-work}

\subsection{Categorical tensor product}
Dauns~\cite{dauns1972},
and later Kornell~\cite{kornell2012},
have considered
an alternative to the spatial tensor product, 
$\otimes$,
called the categorical tensor product, $\tilde \otimes$.
We bring this up,
because, while only the von Neumann algebras of the form~$\linf(X)$
carry a monoid structure in~$(\vNAMIU,\otimes, \mathbb{C})$,
\emph{all}
Abelian von Neumann algebras carry 
a commutative monoid structure in~$(\vNAMIU,\tilde\otimes,\mathbb{C})$,
and in fact,
the category
of (commutative) monoids on
$(\vNAMIU,\tilde\otimes,\mathbb{C})$,
is equivalent to~$\CvNAMIU$.
We will only sketch a proof of these statements here.

Let us quickly describe~$\tilde\otimes$.
Let~$\mathscr{A}$ and~$\mathscr{B}$
be von Neumann algebras.
Then $\mathscr{A}\tilde\otimes\mathscr{B}$
is a von Neumann algebra
equipped with a map $\tilde\otimes \colon \mathscr{A}\times\mathscr{B}\to
\mathscr{A}\tilde\otimes\mathscr{B}$,
and $\mathscr{A}\tilde\otimes \mathscr{B}$
has the following `universal property'.
For every von Neumann algebra~$\mathscr{C}$,
and for all normal MIU-maps $f\colon \mathscr{C}\to\mathscr{A}$
and $g\colon \mathscr{C}\to\mathscr{B}$
with $f(c)g(d)=g(d)f(c)$ for all~$c,d\in\mathscr{C}$,
there is a unique normal MIU-map $h\colon \mathscr{A}\tilde\otimes\mathscr{B}
\to\mathscr{C}$ with $h(a\tilde\otimes b) =f(a)g(b)$
for all $a,b\in\mathscr{A}$. 
(This universal property differs slightly
from the one given in Definition~6.2 of~\cite{kornell2012}
but nonetheless correct,
as one easily sees by inspecting
 the proof of Proposition~6.1 of~\cite{kornell2012}.)

With the universal property,
one can extend~$\tilde\otimes$
to a bifunctor on~$\vNAMIU$ 
(via $(f_1\tilde \otimes f_2)(a_1\tilde \otimes a_2)
= f_1(a_1)\tilde\otimes f_2(a_2)$),
and easily sees that
$(\vNAMIU,\tilde\otimes,\mathbb{C})$
is  a monoidal category
(cf.~Theorem~III of~\cite{dauns1972}).

Let~$\mathscr{A}$
be an Abelian von Neumann algebra.
Since~$a b = ba$
for all~$a,b\in\mathscr{A}$,
there is a unique normal MIU-map
$\delta\colon \mathscr{A}\tilde\otimes\mathscr{A}
\to \mathscr{A}$
with~$\delta(a\tilde\otimes b)= a b$.
Then~$(\mathscr{A},\delta,{!})$
is a monoid in~$(\vNAMIU,\tilde\otimes,\mathbb{C})$,
where~$!\colon \mathbb{C}\to\mathscr{A}$
is the unique normal MIU-map of this type.
Conversely,
let~$(\mathscr{A},m,u)$
be a monoid 
in $(\vNAMIU,\tilde\otimes,\mathbb{C})$.
Then  clearly~$u={!}$.
Further,
by replacing~$\otimes$ by~$\tilde\otimes$
in the proof of Lemma~\ref{lem:uniqueness-duplicator},
we see that~$\mathscr{A}$ is Abelian and~$m(a\tilde\otimes b)=ab$.
In particular,
any normal MIU-map between
monoids in $(\vNAMIU,\tilde\otimes,\mathbb{C})$
is a monoid morphism.
Hence
the category of (commutative) monoids
on $(\vNAMIU,\tilde\otimes,\mathbb{C})$
is equivalent to~$\CvNAMIU$.

One may wonder 
if there is a free monoid in $(\vNAMIU,\tilde\otimes,\mathbb{C})$
on a von Neumann algebra~$\mathscr{A}$.
This boils down to finding a left adjoint to
the inclusion~$\mathcal{J}\colon \CvNAMIU\to\vNAMIU$.
Let us just mention
that there is a left adjoint
to~$\mathcal{J}$,
which maps a von Neumann algebra~$\mathscr{A}$
to $\{a\in \mathscr{A}\mid
\forall b,c\in\mathscr{A}\ [\ abc=cab\ ]\}$.

Recall that the spatial tensor product~$\otimes$
is used to built our model of the quantum lambda calculus.
We would like to mention that
$\tilde\otimes$ cannot serve
the same role,
because~$(\vNAMIU,\tilde\otimes,\mathbb{C})$
is not monoidal closed
(see Corollary~6.5 of~\cite{kornell2012}),
and we do not know
if~$\tilde\otimes$ extends to a functor on~$\vNACPsU$.

\subsection{$C^*$-algebras}
Let~$\CMIU$ be
the category of unital $C^*$-algebras
and MIU-maps.
Then~$(\CMIU,\otimes,\mathbb{C})$
is an SMC
where~$\otimes$ is the spatial tensor product
(see Proposition~2.7 of~\cite{cho2014}).
In contrast
with~$\vNAMIU$, \emph{all}
commutative $C^*$-algebras
carry a commutative monoid structure
in~$(\CMIU,\otimes,\mathbb{C})$,
the category of (commutative)
monoids on~$(\CMIU,\otimes,\mathbb{C})$
is equivalent to
the category~$\CCMIU$
of commutative $C^*$-algebras,
and the free (commutative) monoid
on~$(\CMIU,\otimes,\mathbb{C})$
on a $C^*$-algebra~$\mathscr{A}$
is $C(\mathrm{sp}(\mathscr{A}))$,
where~$\mathrm{sp}(\mathscr{A})$
is the spectrum of~$\mathscr{A}$
(i.e.~the compact Hausdorff space
of MIU-maps from~$\mathscr{A}\to\mathbb{C}$).
We only have space for some hints here.

To see that every
commutative $C^*$-algebra~$\mathscr{A}$
carries a
commutative monoid structure,
note that the spatial tensor product $\mathscr{A}\otimes \mathscr{A}$
coincides with the projective tensor product
$\mathscr{A}\otimes_{\mathrm{max}}\mathscr{A}$
(by Lemma~4.18 of~\cite{Takesaki1}),
which has a universal property similar to that
of the categorical tensor product of von Neumann algebras
(see Proposition 4.7 of~\cite{Takesaki1})
by which one can define a MIU-map
$\delta\colon \mathscr{A}\otimes \mathscr{A}\to\mathscr{A}$
with $\delta(a\otimes b)=ab$ for all~$a,b\in \mathscr{A}$.
Conversely,
if~$(\mathscr{A},m,u)$
is a monoid in~$(\CMIU,\otimes,\mathbb{C})$,
then~$u$ is the unique map~$\mathbb{C}\to\mathscr{A}$,
and by
an easy variation on Lemma~\ref{lem:uniqueness-duplicator} 
for unital $C^*$-algebras,
we see that~$\mathscr{A}$ is commutative, and that~$m(a\otimes b)=ab$
for all~$a,b\in\mathscr{A}$.
Thus the category of (commutative) monoids
on~$(\CMIU,\otimes,\mathbb{C})$
is equivalent to~$\CCMIU$.
Further,
using the obvious strong monoidal adjunction
\[
\xymatrix@C+1pc{
(\mathbf{CH},\times,1)
\ar@/^1.5ex/[r]^-{C}
\ar@{}[r]|-{\bot}
&
\ar@/^1.5ex/[l]^-{\mathrm{sp}}
((\CMIU)^{\op},\otimes,\C)
},
\]
where~$\mathbf{CH}$
is the category of compact Hausdorff spaces
and continuous maps,
one can show (by the same reasoning
as in Theorem~\ref{thm:free-monoid-in-vNAMIU}),
that~$C(\mathrm{sp}(\mathscr{A}))$
is the free (commutative) monoid on~$\CMIU$.

\subsection{Relation to cloning and broadcasting of states}
\label{S:broadcasting}

Our work is motivated by the study of
a model of the quantum lambda calculus,
which contains a duplicability operator ``$\oc$''
based on linear logic.
The need of the linear type system in quantum programming
comes from the well-known fact that quantum states cannot be
cloned~\cite{Dieks1982,WoottersZ1982} nor
broadcast~\cite{BarnumCFJS1996}.
Recently (no-)cloning and (no-)broadcasting
have been studied in categorical quantum mechanics~\cite{Abramsky2010,CoeckeK2015},
and also in operator algebras~\cite{KaniowskiL2015}.

The notion of cloning and broadcasting can be formulated
in von Neumann algebras as follows (the definition is much the same
as \cite{KaniowskiL2015}).
Recall that a normal state of a von Neumann algebra $\scrA$
is a normal (completely) positive unital map $\omega\colon\scrA\to\C$.

\begin{definition}
Let $\scrA$ be a von Neumann algebra.
\begin{enumerate}
\item
A normal state $\omega\colon\scrA\to\C$
is \textbf{cloned} by a normal CPsU-map
$\delta\colon\scrA\otimes\scrA\to\scrA$
if $\omega\circ \delta=\omega\otimes\omega$,
or equivalently, $\omega(\delta(a\otimes b))=
\omega(a)\omega(b)$ for all $a,b\in\scrA$.
\item
A normal state $\omega\colon\scrA\to\C$
is \textbf{broadcast} by a normal CPsU-map
$\delta\colon\scrA\otimes\scrA\to\scrA$
if $\omega(\delta(1\otimes a))=\omega(a)=\omega(\delta(a\otimes1))$
for all $a\in\scrA$.
\end{enumerate}
\end{definition}

There is a clear relationship between duplicability and broadcasting.

\begin{proposition}
A von Neumann algebra $\scrA$ is duplicable if and only if
there exists a normal CPsU-map
$\delta\colon\scrA\otimes\scrA\to\scrA$ 
which broadcasts all normal states of~$\scrA$.
\end{proposition}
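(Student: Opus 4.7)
The plan is a short two-direction argument: ($\Rightarrow$) uses Theorem~\ref{thm:duplicable} together with commutativity; ($\Leftarrow$) uses that normal states separate points of $\scrA$. I do not expect either direction to be genuinely hard.

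For ($\Rightarrow$), starting from a duplicable $\scrA$, Lemma~\ref{lem:unit-duplicator} forces the unit to be $u=1$ and Theorem~\ref{thm:duplicable} identifies $\scrA$ with $\linf(X)$ and the unique duplicator with the multiplication $\delta(a\otimes b)=ab$. Since $\scrA$ is commutative, so is $\scrA\otimes\scrA$, and $\delta$ is in fact a normal unital $*$-homomorphism, hence in particular a normal CPsU-map. Broadcasting of any normal state $\omega$ is then immediate from $\delta(a\otimes 1)=a=\delta(1\otimes a)$.

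For ($\Leftarrow$), given a normal CPsU-map $\delta\colon\scrA\otimes\scrA\to\scrA$ broadcasting every normal state, I obtain $\omega(\delta(a\otimes 1)-a)=0=\omega(\delta(1\otimes a)-a)$ for every normal state $\omega$ and every $a\in\scrA$. Because the predual $\scrA_*$ separates points of $\scrA$ and is linearly spanned by normal states (via the Jordan decomposition of normal linear functionals), this pointwise equality upgrades to $\delta(a\otimes 1)=a=\delta(1\otimes a)$. Setting $u:=1$, and noting that ``normal CPsU'' is stronger than ``normal positive subunital,'' the map $\delta$ is then a duplicator on $\scrA$, so $\scrA$ is duplicable. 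The only real ``obstacle,'' such as it is, is precisely this last upgrade from pointwise-on-states to equality as elements, which rests entirely on the separation property of the predual; the ``easy'' upgrade in the forward direction, from the merely positive-subunital duplicator of the definition to a CPsU map, comes for free from commutativity.
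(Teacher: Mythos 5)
Your proposal is correct and follows essentially the same route as the paper: the forward direction reduces to Theorem~\ref{thm:duplicable} (via $\scrA\cong\linf(X)$, where the unique duplicator is the multiplication, a normal MIU- hence CPsU-map — this is what the paper packages as Proposition~\ref{prop:dup-vna-is-monoid}), and the reverse direction is exactly the paper's one-line argument that normal states are separating, so the state-level broadcasting identities upgrade to $\delta(a\otimes 1)=a=\delta(1\otimes a)$.
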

\begin{proof}
Suppose that $\scrA$ is duplicable via $\delta\colon \scrA\otimes\scrA\to\scrA$
and $u\in\scrA$.
By Theorem~\ref{thm:duplicable}
and Proposition~\ref{prop:dup-vna-is-monoid},
$\delta$ is a normal CPsU-map and $u=1$.
Then clearly all normal states are broadcast by $\delta$.

Conversely, suppose that
there is $\delta\colon\scrA\otimes\scrA\to\scrA$ such that
$\omega(\delta(1\otimes a))=\omega(a)=\omega(\delta(a\otimes1))$
for all $a\in\scrA$,
and all normal states $\omega$.
Then, since the set of normal states are separating, we obtain
$\delta(1\otimes a)=a=\delta(a\otimes1)$
for all~$a\in\scrA$,
and so~$\delta$ is a duplicator.
\end{proof}

Note that cloning is stronger notion than broadcasting:
if $\omega$ is cloned by $\delta$,
then $\omega(\delta(1\otimes a))=\omega(1)\omega(a)=\omega(a)$
and similarly $\omega(\delta(a\otimes 1))=\omega(a)$.
Even in a duplicable von Neumann algebra,
not every normal state can be cloned.
Indeed, it is easy to see that
a normal state $\omega\colon\scrA\to\C$ on a duplicable von Neumann algebra
is cloned by its duplicator $\delta\colon \scrA\otimes\scrA\to\scrA$
if and only if $\omega$ preserves the multiplication, i.e.\ $\omega$ is
a normal MIU-map.
It follows, by $\vNAMIU(\linf(X),\C)\cong X$
(Lemma~\ref{lem:nsp-linf-ssm}),
that the set of normal states of $\linf(X)$
cloned by its duplicator is precisely the set $X$.

\section*{Acknowledgements}

The research leading to these results has received funding from the
European Research Council under the European Union's Seventh Framework
Programme (FP7/2007-2013) / ERC grant agreement n\textsuperscript{o} 320571.

\bibliographystyle{eptcs}
\bibliography{common}

\end{document}